\documentclass[onefignum,onetabnum]{siamonline220329}


\usepackage{lipsum}
\usepackage{amsfonts}
\usepackage{graphicx}
\usepackage{epstopdf}
\usepackage{algorithm}
\usepackage{algpseudocode}
\ifpdf
  \DeclareGraphicsExtensions{.eps,.pdf,.png,.jpg}
\else
  \DeclareGraphicsExtensions{.eps}
\fi


\newsiamremark{remark}{Remark}
\newsiamremark{hypothesis}{Hypothesis}
\crefname{hypothesis}{Hypothesis}{Hypotheses}
\newsiamthm{claim}{Claim}
\newsiamthm{problem}{Problem}
\crefname{problem}{Problem}{Problem}

\headers{Quantifying the structural stability of simplicial homology}{N. Guglielmi, A. Savostianov, F. Tudisco}

\title{Quantifying the structural stability of simplicial homology\thanks{Submitted to the editors on DATE.}}

\author{Nicola Guglielmi\thanks{Gran Sasso Science Institute, L'Aquila, Italy 
  (\email{nicola.guglielmi@gssi.it}, \email{anton.savostianov@gssi.it}, \email{francesco.tudisco@gssi.it}).}
\and Anton Savostianov\footnotemark[2]
\and Francesco Tudisco\footnotemark[2]}

\usepackage{amsopn}
\DeclareMathOperator{\diag}{diag}

\makeatletter
\newcommand*{\addFileDependency}[1]{
  \typeout{(#1)}
  \@addtofilelist{#1}
  \IfFileExists{#1}{}{\typeout{No file #1.}}
}
\makeatother

\newcommand*{\myexternaldocument}[1]{%
    \externaldocument{#1}%
    \addFileDependency{#1.tex}%
    \addFileDependency{#1.aux}%
}

\ifpdf
\hypersetup{
  pdftitle={An Example Article},
  pdfauthor={D. Doe, P. T. Frank, and J. E. Smith}
}
\fi

\myexternaldocument{ex_supplement}

\usepackage{enumitem}
\usepackage{faktor}
\usepackage{amsmath}
\usepackage{amssymb}
\usepackage{comment}

\def\tilde#1{\widetilde{#1}}
\def\bar#1{\overline{#1}}
\def\mc#1{\mathcal{#1}}

\newcommand\La{ L_0}

\DeclareMathOperator*{\im}{\mathrm{im}}

\newcommand\Lap{{L}_1}

\DeclareMathOperator*{\Sym}{\mathrm{Sym}}
\DeclareMathOperator*{\vect}{\mathrm{diagvec}}
\newcommand\eps{\varepsilon}

\renewcommand\vec[1]{\boldsymbol{#1}}

\newcommand\scal[1]{\left\langle {#1} \right\rangle}

\makeatletter
\DeclareRobustCommand*{\mfaktor}[3][]
{
   { \mathpalette{\mfaktor@impl@}{{#1}{#2}{#3}} }
}
\newcommand*{\mfaktor@impl@}[2]{\mfaktor@impl#1#2}
\newcommand*{\mfaktor@impl}[4]{
   \settoheight{\faktor@zaehlerhoehe}{\ensuremath{#1#2{#3}}}%
   \settoheight{\faktor@nennerhoehe}{\ensuremath{#1#2{#4}}}%
      \raisebox{-0.5\faktor@zaehlerhoehe}{\ensuremath{#1#2{#3}}}%
      \mkern-4mu\diagdown\mkern-5mu%
      \raisebox{0.5\faktor@nennerhoehe}{\ensuremath{#1#2{#4}}}%
}

\newsiamthm{example}{Example}
\crefname{example}{Example}{Example}

\makeatletter
\newcommand*\bigcdot{\mathpalette\bigcdot@{.5}}
\newcommand*\bigcdot@[2]{\mathbin{\vcenter{\hbox{\scalebox{#2}{$\m@th#1\bullet$}}}}}
\makeatother

\usepackage{stackengine}

\newcommand\obullet[1]{\ensurestackMath{\stackon[0pt]{#1}{\mkern1mu\bigcdot}}}

\renewcommand{\dot}{\obullet}

\makeatother

\usepackage{ wasysym }

\usepackage{tikz}
\usetikzlibrary{arrows.meta}
\usetikzlibrary{patterns}
\usetikzlibrary{matrix}

\newcommand{\AxisRotator}[1][rotate=0]{%
    \tikz [x=0.15cm,y=0.15cm,line width=.2ex,-stealth,#1] \draw (0,0) arc (-150:150:1 and 1);%
}
\newcommand{\AxisRotatorMirror}[1][rotate=0]{%
    \tikz [x=0.25cm,y=0.25cm,line width=.2ex,-stealth,#1] \draw (0,0) arc (150:-150:1 and 1);%
}
\newcommand{\EVert}[2]{%
\left[\begin{smallmatrix} #1 \\ #2 \end{smallmatrix}\right]
}
\newcommand{\TVert}[3]{%
\left[\begin{smallmatrix} #1 \\ #2 \\ #3 \end{smallmatrix}\right]
}
\usepackage{pgfplots}
\usepackage{tikz-network}
\pgfplotsset{compat=1.5}
\usetikzlibrary{intersections}

\usetikzlibrary{decorations.pathreplacing,decorations.markings}
\tikzset{
  on each segment/.style={
    decorate,
    decoration={
      show path construction,
      moveto code={},
      lineto code={
        \path [#1]
        (\tikzinputsegmentfirst) -- (\tikzinputsegmentlast);
      },
      curveto code={
        \path [#1] (\tikzinputsegmentfirst)
        .. controls
        (\tikzinputsegmentsupporta) and (\tikzinputsegmentsupportb)
        ..
        (\tikzinputsegmentlast);
      },
      closepath code={
        \path [#1]
        (\tikzinputsegmentfirst) -- (\tikzinputsegmentlast);
      },
    },
  },
  mid arrow/.style={postaction={decorate,decoration={
        markings,
        mark=at position .5 with {\arrow[#1]{stealth}}
      }}},
}

\usepackage{graphicx, epstopdf}
\usepackage[caption=false]{subfig}
\usepackage{multirow}
\definecolor{bananamania}{rgb}{0.98, 0.91, 0.71}
\definecolor{lavender}{rgb}{0.4470588235294118, 0.5294117647058824, 0.992156862745098}
\definecolor{burntsienna}{rgb}{0.91, 0.45, 0.32}
\definecolor{airforceblue}{rgb}{0.36, 0.54, 0.66}
\definecolor{liberty}{HTML}{5158BB}
\definecolor{junglegreen}{rgb}{0.16, 0.67, 0.53}
\Crefname{ALC@unique}{Line}{Lines}

\setlist[itemize]{leftmargin=*}
\setlist[enumerate]{leftmargin=*}

\begin{document}

\maketitle

\begin{abstract}
  Simplicial complexes are generalizations of classical graphs. Their homology groups are widely used to characterize the structure and the topology  of  data in e.g. chemistry, neuroscience, and transportation networks. 
  In this work we assume we are given a simplicial complex and that we can act on its underlying graph, formed by the set of 1-simplices, and we investigate the stability of its homology with respect to perturbations of the edges in such graph. Precisely, exploiting the isomorphism between the homology groups and the higher-order Laplacian operators, we propose a numerical method to compute the smallest graph perturbation sufficient to change the dimension  of the simplex's Hodge homology. Our approach is based on a matrix nearness problem formulated as a matrix differential equation, which requires an appropriate weighting and normalizing procedure for the boundary operators acting on the  Hodge algebra's homology groups. 
  We develop a bilevel optimization procedure suitable for the formulated matrix nearness problem and illustrate the method’s performance  on a variety of synthetic quasi-triangulation datasets and real-world transportation networks.

\end{abstract}

\begin{keywords}
  simplicial complexes, homology groups, graph Laplacian, Hodge Laplacian, matrix nearness problems, matrix ODEs, spectral optimization, constrained gradient system
\end{keywords}

\begin{MSCcodes}
	05C50, 
     65F45,  	
	65K10, 
	57M15, 
    62R40 
\end{MSCcodes}

\section{Introduction}
\label{sec:introduction}
Models based on graphs are ubiquitous in the sciences and engineering as they allow us to model various complex systems in a unified form. Despite being widely and successfully used, graph-based models are limited to pairwise relationships and a variety of recent work has shown that  
many complex systems and datasets are better described by higher-order relations that go beyond pairwise interactions \cite{Battiston2020, Benson16,BickSchaub21}. 
Relational data is full of interactions that happen in groups. For example, friendship relations often involve groups that are larger than two individuals and triangles are important building blocks of relational data \cite{altenburger2018monophily,arrigo2020framework,nettasinghe2019diffusion}.  
Also in the presence of point-cloud data, directly modeling higher-order data interactions has led to improvements in numerous data mining settings, including  clustering~\cite{Benson16,fountoulakis2021local,tudisco2021nonlinear}, link prediction~\cite{arrigo2020framework,benson2018simplicial}, and ranking~\cite{benson2019three,tudisco2017node,tudisco2022core}.


A popular extension of dyadic models to the higher-order setting uses hypergraphs and relies on the transition from matrix-based to tensor-based models~\cite{Battiston2020,BickSchaub21}. Based on recent generalizations of graph spectral theory to tensor eigenproblems \cite{tensor_analysis_book,gautier_sigest_2023}, a number of graph methods based on matrices for e.g.\ clustering, community detection, centrality, opinion dynamics, have been extended to higher-order models based on tensors~\cite{arrigo2020framework,tudisco2021nonlinear,benson2019three,neuhauser2022consensus,neuhauser2023learning}.

Alongside hypergraphs and tensors, \emph{simplicial complexes} and \emph{higher-order Laplacians} are another standard model for higher-order interactions, 
where simplices of different order can connect an arbitrary number of nodes \cite{Battiston2020,BickSchaub21}. 
Higher-order Laplacians are linear operators that generalize the better-known graph Laplacian and provide key algebraic tools that allow to describe a simplicial complex and its structural properties. In particular, their kernels define a homology of the data and reveal fundamental topological properties such as connected components, holes, and voids \cite{lim2015hodge,chen2021decomposition}. 

In this work we are interested to quantifying the stability of such topological properties, with respect to edge perturbations. More precisely, given an initial simplicial complex $\mathcal K$ with a corresponding homology and an underlying graph $\mathcal G_{\mathcal K}$ formed by the 1-simplices of $\mc K$, we want to quantify how far is $\mathcal K$ from another simplicial complex with a homology group of strictly different dimension. Here we assume we can act on the edges of $\mathcal G_{\mathcal K}$ and ``being far''  is measured in terms of the least number (or, in the weighted case, the least weight) of edges of $\mc G_{\mc K}$ that can be eliminated to change the dimension of the chosen homology group. While this form of stability is reminiscent of the persistence of the homology, which is widely studied in the literature, we remark that the two are significantly different. Rather than starting with a dataset of points on which to build a chain of simplices and an associated persistency diagram, in our setting, we assume we are given an initial simplex as a result of a data assimilation and modeling process we have no access to. Thus, acting on the elementary pairwise information we are given, the 1-simplices, we want to quantify the robustness of the homology that characterizes the simplex at hand. While a great effort has been devoted in recent years to measure the presence and persistence  of simplicial homology \cite{ebli2019notion,otter2017roadmap}, much less is available about the stability of the homology classes with respect to data perturbation in the non-regular (non-geometric) setting \cite{chazal2014persistence}. 

The resulting problem can be formulated as a structured matrix nearness problem, which we approach by means of a  spectral objective function and the integration of the corresponding matrix-valued gradient flow. 
In order to make a sound mathematical formulation of the problem we aim to solve, and of the numerical model we design for its solution, we structure the remainder of the paper as follows: in \Cref{sec:simplicial-complexes-and-higher-order-laplacians} we review in detail the notion of simplicial complexes and the corresponding higher-order Laplacians. In \Cref{sec:weighted-case} and \Cref{subsec:connetedness} we  discuss how these operators may be extended to account for weighted higher-order node relations and formulate the corresponding stability problem in  \Cref{sec:nearest_complex} and \Cref{subsec:functional}. After introducing a suitable spectral functional whose minimization to zero mathematically translates the stability problem, in \Cref{sec:2lev} and \Cref{sec:algorithm} we present a two-level methodology whose inner iteration consists of a constrained matrix gradient flow on which is based our numerical method, and whose outer level tunes a suitable scalar parameter in order to solve a related scalar equation. Finally, we devote \Cref{sec:experiments} to illustrate the performance of the proposed numerical scheme on several example datasets.

\section{Simplicial complexes and higher-order relations} \label{sec:simplicial-complexes-and-higher-order-laplacians}

A graph $\mc G$ is a pair of sets $(\mc V, \mc E)$, where $\mc V=\{1,\dots,n\}$ is the set of vertices  and $\mc E \subset \mc  V \times \mc V$ is a set of unordered pairs representing the undirected edges of $\mc G$. We let $m$ denote the number of edges $\mc E = \{e_1,\dots,e_m\}$ and we assume them ordered lexicographically, with the convention that $i<j$ for any $\{i,j\}\in \mc E$. For brevity, we often write $ij$ in place of $\{i,j\}$ to denote the edge joining $i$ and $j$. Moreover, we assume no self-loops, i.e., $ii \notin \mc E$ for all $i \in \mc V$. 

A graph  only considers pairwise relations between the vertices. A simplicial complex $\mc K$ is a generalization of a graph that allows us to model connections involving an arbitrary number of nodes by means of higher-order simplices.
Formally,  a $k$-th order simplex (or $k$-simplex, briefly) is a set of $k+1$ vertices $\{i_0,i_1,\dots, i_k\}$  with the property that every subset of $k$ nodes itself is a $(k-1)$-simplex. Any $(k-1)$-simplex of a $k$-simplex is called a \textit{face}. The collection of all such simplices forms a simplicial complex $\mc K$, which therefore  essentially consists of a collection of sets of vertices such that every subset of the set in the collection is in the collection itself. Thus, a graph $\mc G$ can be thought of as the collection of $0$- and $1$- simplices:  the $0$-simplices form the nodes set of $\mc G$, while $1$-simplices form its edges. To emphasize this analogy, in the sequel we often specify that $\mc K$ is a simplicial complex on the vertex set $\mc V$.

Just like the edges of a graph, to any simplicial complex, we can associate an orientation (or ordering). To underline that an ordering has been fixed, we denote an ordered  $k$-simplex $\sigma$ using square brackets $\sigma = [i_0\dots i_k]$.  
In particular, as for the case of edges, 
we always assume the lexicographical ordering, unless specified otherwise.  That is, we assume that:
\begin{enumerate}[leftmargin=*]
    \item any $k$-simplex $[i_0 \dots i_k]$ in $\mc K$ is such that $i_0<\dots<i_k$;
    \item the $k$-simplices $\sigma_1,\sigma_2,\dots$  of $\mc K$ are ordered so that $\sigma_i\prec \sigma_{i+1}$ for all $i$, where $[i_0\dots i_k]\prec [i_0'\dots i_k']$ if and only if there exists $h$ such that $0\leq h\leq k$, 
    $i_j=i_j'$ for $j=0,\dots,h$ and $i_h<i_h'$.
\end{enumerate}
As for the edges, we often write $i_0\dots i_k$ in place of $[i_0\dots i_k]$ in this case.

\subsection{Homology, boundary operators and higher-order Laplacians}
Topological properties of a simplicial complex can be studied by considering boundary operators, higher-order Laplacians, and the associated homology.
Here we recall these concepts trying to emphasize their matrix-theoretic flavor. To this end, we first fix some further notation and recall the notion of a real $k$-chain.

\begin{definition}
  Assume $\mc K$ is a simplicial complex on the vertex set $\mc V$. For $k\geq 0$,  we denote the set of all the oriented $k$-th order simplices in $\mc K$ as $\mc V_k(\mc K)$ or simply $\mc V_k$. Thus, $\mc V_0=\mc V$ and $\mc V_1=\mc E$ form the underlying graph of $\mc K$, which we denote by $\mc G_{\mc K}=(\mc V,\mc E)=(\mc V_0,\mc V_1)$. 
\end{definition}

\begin{definition}  \label{def:chains}
  The formal real vector space spanned by  all the elements of $\mc V_k$ with real coefficients is  denoted by $C_k(\mc K)$. Any element of $C_k(\mc K)$, the formal linear combinations of simplices in  $\mc V_k$, is called a  \emph{$k$-chain}. 
\end{definition}
We remark that, in the graph-theoretic terminology, $C_0(\mc K)$ is   usually called the space of \emph{vertices' states}, while $C_1(\mc K)$ is usually called the space of \emph{flows} in the graph.

The chain spaces are finite vector spaces generated by the set of $k$-simplices. The boundary and co-boundary operators are particular linear mappings between $C_k$ and $C_{k-1}$, which in a way are the discrete analogous of high-order differential operators (and their adjoints) on continuous manifolds.  The boundary operator $\partial_k$ maps a $k$-simplex to an alternating sum of its $(k-1)$-dimensional faces obtained
by omitting one vertex at a time. Its  precise definition is recalled below, while \Cref{fig:bound_mat} provides an illustrative example of its action. 

\begin{definition}
  \label{def:bound_oper}
  Let $k\geq 1$. Given a simplicial complex $\mc K$ over the set $\mc V$, the  \emph{boundary operator} $\partial_k \colon C_k(\mc K) \mapsto C_{k-1}(\mc K)$ maps every ordered $k$-simplex  $[i_0 \ldots i_k]\in C_k(\mc K)$  to the following alternated sum of its faces:
  \[
    \partial_k [ i_0 \ldots i_k]=\sum_{j=0}^k (-1)^{j} [i_0 \ldots i_{j-1} i_{j+1} \ldots i_k] \in C_{k-1}(\mc K)
  \]
\end{definition}

As we assume the $k$-simplices in $\mc V_k$ are ordered lexicographically, we can fix a canonical basis for $C_k(\mc K)$ and we can represent  $\partial_k$ as a matrix $B_k$ with respect to such basis. In fact, once the ordering is fixed, $C_k(\mc K)$ is isomorphic to $\mathbb R^{\mc V_k}$, the space of functions from $\mc V_k$ to $\mathbb R$ or, equivalently,  the space of real vectors with $|\mc V_k|$ entries. Thus, $B_k$ is a $|\mc V_{k-1}|\times |\mc V_k|$ matrix and $\partial_k^*$ coincides with $B_k^\top$. 
We shall always assume the canonical basis for $C_k(\mc K)$ is fixed in this way and we will deal exclusively with the matrix representation $B_k$ from now on. 
An example of $B_k$ for $k=1$ and $k=2$ is shown in \Cref{fig:bound_mat}. 
\begin{figure}[t]
  \centering
 \scalebox{1.0}{ 
 \begin{tikzpicture}
  \fill [opacity=0.1,red]    (0, 0.5) -- (4/3, 0.5) --  (2/3, 11/6) -- cycle;
  \fill [opacity=0.1,red]    (7/3, 2.5) -- (10/3, 2.5) --  (3, 3.5) -- cycle;
  \fill [opacity=0.2,red]    (7/3, 2.5) -- (8/3, 7/6) --  (10/3, 2.5) -- cycle;
 \Vertex[x=0, y=0.5, label=1, size=0.3]{v1}
 \Vertex[x=2/3, y=11/6, label=2, size=0.3]{v2}
 \Vertex[x=4/3, y=0.5, label=3, size=0.3]{v3}
 \Vertex[x=7/3, y=2.5, label=4, size=0.3]{v4}
 \Vertex[x=8/3, y=7/6, label=5, size=0.3]{v5}
 \Vertex[x=10/3, y=2.5, label=6, size=0.3]{v6}
 \Vertex[x=3, y=3.5, label=7, size=0.3]{v7}
 \Edge[Direct](v1)(v2)
 \Edge[Direct](v1)(v3)
 \Edge[Direct](v2)(v3)
 \Edge[Direct](v2)(v4)
 \Edge[Direct](v3)(v5)
 \Edge[Direct](v4)(v5)
 \Edge[Direct](v4)(v6)
 \Edge[Direct](v4)(v7)
 \Edge[Direct](v5)(v6)
 \Edge[Direct](v6)(v7)

  \node at (2/3,1){\AxisRotatorMirror[rotate=90]};
  \node at (8.25/3,12.5/6){\AxisRotator[rotate=-90]};
  \node at (8.75/3,17/6){\AxisRotator[rotate=-30]};
  
  \Edge[Direct, color=burntsienna, bend=-60, style={dashed}](1/8, 1)(-0.25, -0.25)
  
  \node at (7.5, 3){
   $
   B_1 =  \tiny\arraycolsep=1.4pt 
   \left(
      \begin{array}{c|cccccccccc}
           & \EVert{1}{2} & \EVert{1}{3} & \EVert{2}{3} & \EVert{2}{4} & \EVert{3}{5} &\EVert{4}{5} & \EVert{4}{6} & \EVert{4}{7} & \EVert{5}{6} & \EVert{6}{7} \\
       \hline
       [1] & -1 & -1 & 0 & 0 & 0 & 0 & 0 & 0 & 0 & 0 \cr 
       [2] & 1 & 0 & -1 & -1 & 0 & 0 & 0 & 0 & 0 & 0 \cr
       [3] & 0 & 1 & 1 & 0 & -1 & 0 & 0 & 0 & 0 & 0 \cr
       [4] & 0 & 0 & 0 & 1 & 0 & -1 & -1 & -1 & 0 & 0 \cr
       [5] & 0 & 0 & 0 & 0 & 1 & 1 & 0 & 0 & -1 & 0 \cr
       [6] & 0 & 0 & 0 & 0 & 0 & 0 & 1 & 0 & 1 & -1 \cr
       [7] & 0 & 0 & 0 & 0 & 0 & 0 & 0 & 1 & 0 & 1 
      \end{array}
    \right)$
  };

  \node at (7, 0.5){
  $
  B_2 = \tiny\arraycolsep=1.4pt 
  \left(
      \begin{array}{c|ccc}
        & \TVert{1}{2}{3} & \TVert{4}{5}{6} & \TVert{4}{6}{7} \\
        \hline
        [1,2] & 1 & 0 & 0 \cr
        [1,3] & -1 & 0 & 0 \cr
        [2,3] & 1 & 0 & 0 \cr
        [2,4] & 0 & 0 & 0 \cr
        [3,5] & 0 & 0 & 0 \cr
        [4,5] & 0 & 1 & 0 \cr
        [4,6] & 0 & -1 & 1 \cr
        [4,7] & 0 & 0 & -1 \cr
        [5,6] & 0 & 1 & 0 \cr
        [6,7] & 0 & 0 & 1 \cr
      \end{array}
  \right)
  $
  };
  \node at (2, -0.5){ \small 
  $ \partial_2 ( [ 1, 2, 3] ) = [1, 2] - [1, 3] + [2, 3]$
  };
\end{tikzpicture} 
 }
 \caption{Left-hand side panel: example of simplicial complex $\mc K$ on $7$ nodes, and of the action of $\partial_2$ on the 2-simplex $[1,2,3]$; $2$-simplices included in the complex are shown in red, arrows correspond to the orientation. Panels on the right: matrix forms $B_1$ and $B_2$ of boundary operators $\partial_1$ and $\partial_2$ respectively.  \label{fig:bound_mat}}
\end{figure}
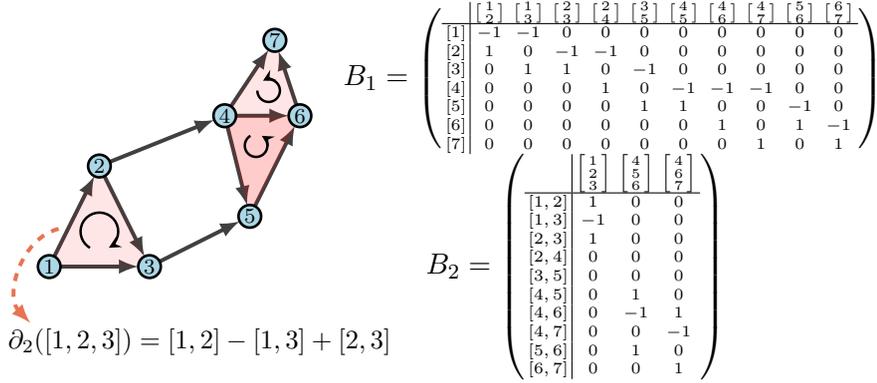

A direct computation shows that  the following fundamental   identity holds (see e.g.~\cite[Thm.~5.7]{lim2015hodge})
\begin{equation}
    B_k  B_{k+1} = 0\label{eq:Bk_Bk+1_0}
\end{equation}
for any $k$. This identity  is  also known in the continuous case as the Fundamental Lemma of Homology and it allows us to define a homology group associated with each $k$-chain. In fact, \Cref{eq:Bk_Bk+1_0} implies in particular that $\mathrm{im} \, B_{k+1} \subset \ker B_k$, so that the $k$-th homology group is correctly defined: 
\begin{equation*}
  \mc H_k=\faktor{\ker B_k}{\mathrm{im} \, B_{k+1}}
\end{equation*}

The dimensionality of the $k$-th homology group is known as \emph{$k$-th Betti's number} $\beta_k=\dim \mc H_k$, while the elements of $\mc H_k$ correspond to so-called $k$-dimensional holes in the simplicial complex. For example, $\mc H_0$, $\mc H_1$  and $\mc H_2$ describe \emph{connected components}, \emph{holes} and \emph{three-dimensional voids} respectively.
By standard algebraic passages one sees that $\mc H_k$ is isomorphic  to $\ker \left( B^\top_k B_k + B_{k+1} B^\top_{k+1}  \right)$. Thus, the number of $k$-dimensional holes corresponds to the dimension of the kernel of a linear operator, which is known as \textit{$k$-th order Laplacian} or \textit{higher-order Laplacian} of the simplicial complex $\mc K$.

\begin{definition}
  \label{def:lap}
Given a simplicial complex $\mc K$ and the boundary operators $B_k$ and $B_{k+1}$,
the \emph{$k$-th order Laplacian} $L_k$ of $\mc K$ is the $|\mc V_k|\times |\mc V_k|$ matrix defined as:
  \begin{equation}
    \label{eq:def_lap}
    L_k=B_k^\top B_k + B_{k+1} B_{k+1}^\top
  \end{equation}
  \end{definition}
  In particular, we remark that:
  \begin{itemize}[leftmargin=*]
    \item the $0$-order Laplacian is the standard \textit{combinatorial graph Laplacian}  $\La=B_1 B_1^\top \in \mathbb R^{n\times n}$, whose diagonal entries consist of the degrees of the corresponding vertices (i.e.\ the number of $1$-simplices each vertex belongs to), while the off-diagonal $(L_0)_{ij}$ is equal to $-1$ if either $ij$ is a $1$-simplex, and it is zero otherwise; 
    \item the $1$-order Laplacian is known as  \textit{Hodge Laplacian} $\Lap =  B_1^\top B_1 + B_2  B_2^\top   \in \mathbb R^{m\times m}$.     Similarly to the $0$-order case $\La$, one can describe the entries of $\Lap$ in terms of the structure of the simplicial complex, see e.g.\ \cite{muhammad2006control}.
  \end{itemize}

  \subsubsection{Connected components and holes}

The boundary operators $B_k$ on $\mc K$ are directly connected with discrete notions of differential operators on the graph. In particular, $B_1$, $B_1^\top$, and $B_2^\top$ are the graph's divergence, gradient, and curl operators, respectively. We refer to \cite{lim2015hodge} for more details. 
As  $\mc H_k$ is isomorphic to $\ker L_k$, we have that the following \textit{Hodge decomposition} of $\mathbb R^{\mc V_k}$ holds
\begin{equation}\label{eq:hodge-decomposition}
    \mathbb R^{\mc V_k} = \im B_{k+1} \oplus \im B_k^\top \oplus \ker L_k = \im \bar B_{k+1} \oplus \im \bar  B_k^\top \oplus \ker \bar L_k \, .
\end{equation} 
Thus, the space of vertex states $\mathbb R^{\mc V_0}$ can be decomposed as $\mathbb R^{\mc V_0}=\im B_1\oplus\ker L_0$, the sum of divergence-free vectors  and harmonic vectors, which correspond to the connected components of the graph. In particular,  for a connected graph, $\ker L_0$ is one-dimensional and consists of entry-wise constant vectors. Thus, for a connected graph, $\im B_1$ is the set of vectors whose entries sum up to zero. 

Similarly, the space of flows on graph's edges $\mathbb R^{\mc V_1}$ can be decomposed as $\mathbb R^{\mc V_1}=\im B_{2} \oplus \im B_1^\top \oplus \ker L_1$. Thus, each flow can be decomposed into its gradient part $\im B_1^\top$, which consists of flows with zero cycle sum, its curl part $\im B_2$, which consists of circulations around order-2 simplices in $\mc K$, and its harmonic part $\ker L_1$, which represents 1-dimensional holes defined as  global circulations modulo the curl flows. 

While $0$-dimensional holes are easily understood as the connected components of the graph $\mc G_{\mc K}$, a notion of ``holes in the graph'' $\mc G_{\mc K}$ corresponds to $1$-dimensional holes in $\mc K$. This terminology comes from the analogy with the continuous case. In fact, if the graph is obtained as a discretization of a continuous manifold, harmonic functions in the homology group  $\mc H_1$ would correspond to the  holes in the manifold, as illustrated in \Cref{fig:example_holes}.  Moreover, the Hodge Laplacian of a simplicial complex built on $N$ randomly sampled points in the manifold converges in the thermodynamic limit to its continuous counterpart, as $N\to \infty$,  \cite{chen2021}. 

\begin{figure}[t]
  \centering 
  {
  \scalebox{1.0}{
    \begin{tikzpicture}
      \draw [black, line width=1, fill=black!20!white] plot [smooth cycle] coordinates {(0,0) (1,1) (3,1) (3,0) (2,-1)};
      \draw [black, line width=1, fill=white] plot [smooth cycle] coordinates {(2.5, 0.8)  (1.5, 0.55) (2, 0.3)     };
  \draw [black!50!white, dashed, line width=1, fill=black!20!white] plot [smooth cycle] coordinates {(5,0) (6,1) (8,1) (8,0) (7,-1)};
      \draw [black!50!white, dashed, line width=1, fill=white] plot [smooth cycle] coordinates {(7, 0.3) (6.5, 0.55) (7.5, 0.8)};
  \draw [black, line width=0.5, fill=black!20!white, circle, dashed  ] (10,0) -- (10.6, 0) -- (10.3, 0.4) -- cycle;
  \draw [black, line width=0.5, fill=black!20!white, circle, dashed  ] (10,0) -- (10.65, 0) -- (10.7, -0.32) -- cycle;
  \draw [black, line width=0.5, fill=black!20!white, circle, dashed  ] (10.6,0) -- (10.3, 0.4) -- (11, 1) -- cycle;
  \draw [black, line width=0.5, fill=black!20!white, circle, dashed  ] (10.6,0) -- (11.1, 0.4) -- (11, 1) -- cycle;
  \draw [black, line width=0.5, fill=black!20!white, circle, dashed  ] (11.5,0.55) -- (11.1, 0.4) -- (11, 1) -- cycle;
  \draw [black, line width=0.5, fill=black!20!white, circle, dashed  ] (11.5,0.55) -- (12, 1) -- (11, 1) -- cycle;
  \draw [black, line width=0.5, fill=black!20!white, circle, dashed  ] (11.5,0.55) -- (12.5, 0.8) -- (12, 1) -- cycle;
  \draw [black, line width=0.5, fill=black!20!white, circle, dashed  ] (13,1) -- (12.5, 0.8) -- (12, 1) -- cycle;
  \draw [black, line width=0.5, fill=black!20!white, circle, dashed  ] (13,1) -- (12.5, 0.8) -- (13, 0) -- cycle;
  \draw [black, line width=0.5, fill=black!20!white, circle, dashed  ] (12,0.3) -- (12.5, 0.8) -- (13, 0) -- cycle;
  \draw [black, line width=0.5, fill=black!20!white, circle, dashed  ] (12,-1) -- (12.1, -0.55) -- (13, 0) -- cycle;
  \draw [black, line width=0.5, fill=black!20!white, circle, dashed  ] (12,-1) -- (12.1, -0.55) -- (11.5, -0.2) -- cycle;
  \draw [black, line width=0.5, fill=black!20!white, circle, dashed  ] (10.7,-0.32) -- (12, -1) -- (11.5, -0.2) -- cycle;
  \draw [black, line width=0.5, fill=black!20!white, circle, dashed  ] (10.7,-0.32) -- (10.6, 0) -- (11.5, -0.2) -- cycle;
  \draw [black, line width=0.5, fill=black!20!white, circle, dashed  ] (11.1, 0.4) -- (10.6, 0) -- (11.5, -0.2) -- cycle;
  \draw [black, line width=0.5, fill=black!20!white, circle, dashed  ] (11.1, 0.4) -- (11.5, 0.55) -- (11.5, -0.2) -- cycle;
  \draw [black, line width=0.5, fill=black!20!white, circle, dashed  ] (12, 0.3) -- (11.5, 0.55) -- (11.5, -0.2) -- cycle;
  \draw [black, line width=0.5, fill=black!20!white, circle, dashed  ] (12, 0.3) -- (12.1, -0.55) -- (11.5, -0.2) -- cycle;
  \draw [black, line width=0.5, fill=black!20!white, circle, dashed  ] (12, 0.3) -- (12.1, -0.55) -- (13, 0) -- cycle;
  \node [draw=black,fill=black, shape=circle, scale=0.3] at (10, 0) {};
  \node [draw=black,fill=black, shape=circle, scale=0.3] at (11.1, 0.4) {};
  \node [draw=black,fill=black, shape=circle, scale=0.3] at (11.5, 0.55) {};
  \node [draw=black,fill=black, shape=circle, scale=0.3] at (11.5, -0.2) {};
  \node [draw=black,fill=black, shape=circle, scale=0.3] at (12, 0.3) {};
  \node [draw=black,fill=black, shape=circle, scale=0.3] at (12.1, -0.55) {};
  \node [draw=black,fill=black, shape=circle, scale=0.3] at (13, 0) {};
  \node [draw=black,fill=black, shape=circle, scale=0.3] at (10.7, -0.32) {};
  \node [draw=black,fill=black, shape=circle, scale=0.3] at (10.6, 0) {};
  \node [draw=black,fill=black, shape=circle, scale=0.3] at (12, -1) {};
  \node [draw=black,fill=black, shape=circle, scale=0.3] at (12.5, 0.8) {};
  \node [draw=black,fill=black, shape=circle, scale=0.3] at (10.3, 0.4) {};
  \node [draw=black,fill=black, shape=circle, scale=0.3] at (11, 1) {};
  \node [draw=black,fill=black, shape=circle, scale=0.3] at (12, 1) {};
  \node [draw=black,fill=black, shape=circle, scale=0.3] at (13, 1) {};
  \node [draw=black,fill=black, shape=circle, scale=0.3] at (5, 0) {};
  \node [draw=black,fill=black, shape=circle, scale=0.3] at (6.1, 0.4) {};
  \node [draw=black,fill=black, shape=circle, scale=0.3] at (6.5, 0.55) {};
  \node [draw=black,fill=black, shape=circle, scale=0.3] at (6.5, -0.2) {};
  \node [draw=black,fill=black, shape=circle, scale=0.3] at (7, 0.3) {};
  \node [draw=black,fill=black, shape=circle, scale=0.3] at (7.1, -0.55) {};
  \node [draw=black,fill=black, shape=circle, scale=0.3] at (8, 0) {};
  \node [draw=black,fill=black, shape=circle, scale=0.3] at (5.7, -0.32) {};
  \node [draw=black,fill=black, shape=circle, scale=0.3] at (5.6, 0) {};
  \node [draw=black,fill=black, shape=circle, scale=0.3] at (7, -1) {};
  \node [draw=black,fill=black, shape=circle, scale=0.3] at (7.5, 0.8) {};
  \node [draw=black,fill=black, shape=circle, scale=0.3] at (5.3, 0.4) {};
  \node [draw=black,fill=black, shape=circle, scale=0.3] at (6, 1) {};
  \node [draw=black,fill=black, shape=circle, scale=0.3] at (7, 1) {};
  \node [draw=black,fill=black, shape=circle, scale=0.3] at (8, 1) {};

    \end{tikzpicture}
    }
  }
  \caption{Continuous and analogous discrete manifolds with one $1$-dimensional hole ($\dim \bar{\mc H}_1=1$). Left pane: the continuous manifold; center pane: the discretization with mesh vertices; right pane: a simplicial complex built upon the mesh. Triangles in the simplicial complex $\mc K$ are colored gray (right). 
  \label{fig:example_holes}
  }
\end{figure}
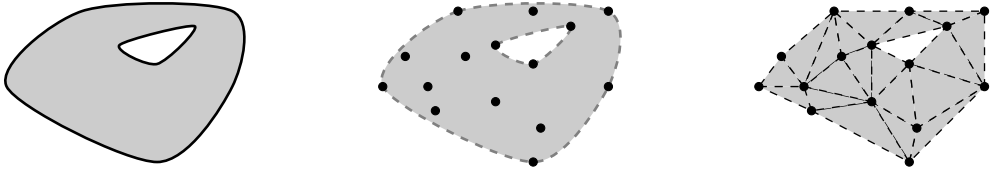

\subsection{Normalized and weighted higher-order Laplacians}\label{sec:weighted-case}

In the classical graph setting, a normalized and weighted version of the Laplacian matrix is very often employed in applications.  
From a matrix theoretic point of view, having a weighted graph corresponds to allowing arbitrary nonnegative entries in the adjacency matrix defining the graph. In terms of boundary operators,  this coincides with a positive diagonal rescaling. Analogously, the normalized Laplacian is defined by applying a diagonal congruence transformation to the standard Laplacian using the node weights. We briefly review these two constructions below.

    Let $\mc G=(\mc V,\mc E)$ be a graph with positive node and edge weight functions $w_0:\mc V \to \mathbb R_{++}$ and  $w_1:\mc E\to \mathbb R_{++}$, respectively. Define the $|\mc V|\times |\mc V|$ and $|\mc E|\times |\mc E|$ diagonal matrices
    $
    W_0 = \mathrm{Diag}\{w_0(v_i)^{1/2}\}_{i=1}^n$ and $W_1 = \mathrm{Diag}\{w_1(e_i)^{1/2}\}_{i=1}^m
    $. 
    Then $\bar B_1 = W_0^{-1}B_1W_1$ is the normalized and weighted boundary operator of $\mc G$ and we have that 
    \begin{equation}\label{eq:normalized_L0}
          \bar L_0 = \bar B_1\bar B_1^\top = W_0^{-1}B_1 W_1^2 B_1^\top W_0^{-1}
    \end{equation}
    is the normalized weighted graph Laplacian of $\mc G$. 

In particular, note that, as for $L_0$, the entries of $\bar L_0$ uniquely characterize the graph topology, in fact we have
    $$
    (\bar L_0)_{ii} = \frac{\deg(i)}{w_0(i)}, \qquad (\bar L_0)_{ij} = \begin{cases}-\frac{w_1(ij)}{\sqrt{w_0(i)w_0(j) }} & ij \in \mc E \\ 0 & \text{otherwise} \end{cases}, \text{ for } i\neq j
    $$
    where $\deg(i)$ denotes  the (weighted) degree of  node $i$, i.e.\ 
    $\deg(i) = \sum_{e\in \mc E: i\in e}w_1(e)$. 

While the definition of $k$-th order Laplacian is well-established for the case of unweighted edges and simplices, a notion of weighted and normalized $k$-th order Laplacian is not universally available and it might depend on the application one has at hand.  For example, different definitions of weighted Hodge Laplacian are considered in \cite{chen2021decomposition,horak2013spectra,lim2015hodge,schaub2020random}.

At the same time, we notice that the notation used  in \Cref{eq:normalized_L0} directly  generalizes to higher orders. Thus, we propose the following notion of normalized and weighted $k$-th Laplacian 

\begin{definition}\label{def:normalized_L1}
  Let ${\mc K}$ be a simplicial complex and  let $w_{k}:\mc V_k\to \mathbb R_{++}$ be a positive-valued weight function on the $k$-simplices of $\mc K$. Define the diagonal matrix $W_k=\mathrm{Diag}\big\{w_k(\sigma_i)^{1/2}\big\}_{i=1}^{|\mc V_k|}$. 
  Then, $\bar B_k= W_{k-1}^{-1}B_kW_k$ is the normalized and weighted $k$-th boundary operator, to which corresponds the normalized and weighted $k$-th Laplacian 
  \begin{equation} \label{eq:bar_L_k}
       \begin{aligned}
  \bar L_k &= \bar B_k^\top  \bar B_k + \bar B_{k+1} \bar B_{k+1}^\top \\
     &= W_kB_k^\top  W_{k-1}^{-2} B_k  W_k + W_k^{-1}B_{k+1} W_{k+1}^2 B_{k+1}^\top W_k^{-1}.
 \end{aligned}
  \end{equation}
\end{definition}

Note that, from the definition $\bar B_k= W_{k-1}^{-1}B_kW_k$ and \Cref{eq:Bk_Bk+1_0}, we immediately have that  $\bar B_k\bar B_{k+1}=0$. Thus, the group $\bar {\mc H}_k = \ker \bar B_k / \im \bar B_{k+1}$ is well defined for any choice of positive weights $w_k$ and is isomorphic to $ \ker \bar L_k$. While the homology group may depend on the weights,  we observe below that its dimension does not. Precisely, we have 
\begin{proposition}
 \label{thm:wHomGroup}
     The dimension of the homology groups of $\mc K$ is not affected by the weights of its $k$-simplices. Precisely, if $W_k$ are positive diagonal matrices, we have
     \begin{equation}
         \dim \ker \bar B_k = \dim \ker B_k, \quad \dim \ker \bar B_k^\top = \dim \ker B_k^\top, \quad \dim \bar{\mc H}_k = \dim \mc H_k\, .
     \end{equation}
     Moreover, $\ker B_k = W_k \ker \bar B_k$ and $\ker B_k^\top = W_{k-1}^{-1} \ker \bar B_k^\top$.
\end{proposition}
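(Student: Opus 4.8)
The entire statement rests on a single elementary fact: since $W_{k-1}$ and $W_k$ are positive diagonal matrices they are invertible, and the normalized operator $\bar B_k = W_{k-1}^{-1} B_k W_k$ is obtained from $B_k$ by pre- and post-multiplication with invertible matrices. Such transformations preserve both the rank and the dimension of the kernel, and this is essentially all that is required. The plan is to first establish the two explicit kernel identities, deduce the equalities of kernel dimensions from them, and finally handle the homology dimension through the ranks of the boundary operators.

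For the kernel of $\bar B_k$, I would argue directly: for any vector $x$ the equivalence $\bar B_k x = 0 \iff B_k (W_k x) = 0$ holds because $W_{k-1}^{-1}$ is invertible. Hence $x \in \ker \bar B_k$ if and only if $W_k x \in \ker B_k$, which is exactly the asserted identity $\ker B_k = W_k \ker \bar B_k$. Since $W_k$ acts as an invertible linear map, it carries a basis of $\ker \bar B_k$ to a basis of $\ker B_k$, giving $\dim \ker \bar B_k = \dim \ker B_k$. For the transpose I would first record that, as $W_k$ and $W_{k-1}$ are diagonal and hence symmetric, $\bar B_k^\top = W_k B_k^\top W_{k-1}^{-1}$; applying the same reasoning to this factorization yields $\ker B_k^\top = W_{k-1}^{-1} \ker \bar B_k^\top$ and the corresponding equality of dimensions.

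It remains to treat $\dim \bar{\mc H}_k = \dim \mc H_k$. Here I would use the quotient-dimension formula $\dim \bar{\mc H}_k = \dim \ker \bar B_k - \dim \im \bar B_{k+1}$, valid because $\im \bar B_{k+1} \subseteq \ker \bar B_k$ (which follows from the identity $\bar B_k \bar B_{k+1} = 0$ already noted before the statement), and analogously for $\mc H_k$. The first term on the right is handled by the previous paragraph. For the second, $\bar B_{k+1} = W_k^{-1} B_{k+1} W_{k+1}$ again differs from $B_{k+1}$ only by invertible factors, so $\dim \im \bar B_{k+1} = \operatorname{rank} \bar B_{k+1} = \operatorname{rank} B_{k+1} = \dim \im B_{k+1}$. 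Subtracting the matched quantities gives $\dim \bar{\mc H}_k = \dim \mc H_k$. Alternatively, one could invoke the isomorphisms $\bar{\mc H}_k \cong \ker \bar L_k$ and $\mc H_k \cong \ker L_k$ together with rank preservation, but the quotient computation is the most transparent route.

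There is no genuine obstacle in this argument; it is pure linear algebra, and the only points requiring a little care are the transpose identity $\bar B_k^\top = W_k B_k^\top W_{k-1}^{-1}$ (where one must remember that a diagonal matrix equals its own transpose, so that the order and the inverses fall into place correctly) and the observation that rank preservation under invertible multiplication must be applied simultaneously to $\bar B_k$ and $\bar B_{k+1}$, so that both the numerator $\ker \bar B_k$ and the denominator $\im \bar B_{k+1}$ of the homology quotient are matched to their unweighted counterparts.
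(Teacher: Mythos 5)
Your proof is correct, and its first half (the kernel identities and the equalities $\dim \ker \bar B_k = \dim \ker B_k$, $\dim \ker \bar B_k^\top = \dim \ker B_k^\top$ via invertibility of the diagonal weight matrices) coincides with the paper's argument. Where you genuinely diverge is the homology-dimension step. You work directly with the quotient definition $\bar{\mc H}_k = \ker \bar B_k / \im \bar B_{k+1}$, using $\dim \bar{\mc H}_k = \dim \ker \bar B_k - \operatorname{rank} \bar B_{k+1}$ together with the fact that pre- and post-multiplication by invertible matrices preserves rank, so $\operatorname{rank}\bar B_{k+1} = \operatorname{rank} B_{k+1}$; this is a purely rank-nullity computation. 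The paper instead passes through the Hodge-theoretic characterization $\dim \bar{\mc H}_k = \dim\left( \ker \bar B_k \cap \ker \bar B_{k+1}^\top \right)$ and applies the inclusion--exclusion formula for subspace dimensions, using that $\ker \bar B_k$ and $\ker \bar B_{k+1}^\top$ together span all of $\mathbb{R}^{\mc V_k}$ (a consequence of the Hodge decomposition; note the paper writes $\cup$ where the sum of subspaces is meant). Your route is more elementary and self-contained, needing nothing beyond $\bar B_k \bar B_{k+1}=0$ and rank invariance; the paper's route has the advantage of running through the harmonic-space picture $\bar{\mc H}_k \cong \ker \bar L_k$ that the rest of the paper relies on, and of implicitly recording the spanning property of the two kernels, which is reused in the spectral arguments later. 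Both are complete proofs of the stated proposition.
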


\begin{proof}
Since $W_k$ is an invertible diagonal matrix, 
$$
    \bar B_k \vec x = 0 \iff W_{k-1}^{-1}B_k W_k \vec x =0 \iff B_k W_k \vec x =0. 
$$
Hence, if $\vec x \in \ker \bar B_k$, then $W_k \vec x \in \ker B_k$, and, since $W_k$ is bijective, $\dim \ker \bar B_k = \dim \ker B_k$. Similarly, one observes that $\dim \ker \bar B_k^\top = \dim \ker B_k^\top$.
Moreover, since $\bar B_k \bar B_{k+1} =0$, then $\im \bar B_{k+1} \subseteq \ker \bar B_k$ and $\im \bar B_k^\top \subseteq \ker \bar B_{k+1}^\top$. This yields $\ker \bar B_k \cup \ker \bar B_{k+1}^\top = \mathbb{R}^{\mc V_k} = \ker B_k \cup \ker  B_{k+1}^\top$. Thus, for the homology group it holds $\dim \mc \bar{H}_k = \dim \mc H_k$.
\end{proof}

\subsection{Principal spectral inheritance}

Before moving on to the next section, we recall here a relatively direct  but important spectral property that connects the spectra of the $k$-th and $(k+1)$-th order Laplacians.

\begin{theorem}[
Spectral inheritance of higher-order Laplacians
]\label{thm:inherit}
  Let $\bar L_k^{down}=\bar B_k^\top \bar B_k$ and $\bar L_k^{up}=\bar B_{k+1} \bar B_{k+1}^\top$. Then:
  \begin{enumerate}
    \item $\sigma_+(\bar L_k^{up})=\sigma_+(\bar L_{k+1}^{down})$, where $\sigma_+(\cdot)$ denotes the set of positive eigenvalues;
    \item for any $ \mu \in \sigma_+(\bar L_k) $,  either $ \mu \in \sigma_+ (\bar L_k^{up}) $ or the corresponding eigenvector $ \vec v  \in \ker \bar L_k^{up}$. Similarly, for any $ \nu \in \sigma_+(\bar L_{k+1}) $, either $ \nu \in \sigma_+ (\bar L_{k+1}^{down}) $ or the corresponding eigenvector $ \vec u  \in \ker \bar L_{k+1}^{down}$, and
    \[
    \bar B_k^\top \bar B_k \vec v  = \mu \vec v, \qquad \bar B_{k+2} \bar B_{k+2}^\top \vec u = \nu \vec u\, . 
    \]
  \end{enumerate}
\end{theorem}
\begin{proof}
  For $ \mu > 0 $ it is sufficient to note that if $ ( \mu, \vec x )$ is an eigenpair for $ \bar L_k^{up} $, then $ ( \mu, \mu^{-1/2} \bar B_{k+1}^\top \vec x ) $ is an eigenpair for $ \bar L_{k+1}^{down} $. Similarly, if $ (\mu, \vec y)$ is an eigenpair of $ \bar L_{k+1}^{down}$, thens $ ( \mu, \mu^{-1/2} \bar B_{k+1} \vec y ) $ is the corresponding eigenpair of $ \bar L_k^{up}$, yielding $(1)$. The statement in (2) follows immediately from the Hodge decomposition (\ref{eq:hodge-decomposition}). 
\end{proof}

In other words, the variation of the spectrum of the $k$-th Laplacian when moving from one order to the next one works as follows: 
the down-term $\bar L_{k+1}^{down}$ inherits the positive part of the spectrum from the up-term of  $\bar L_k^{up}$; the  eigenvectors corresponding to the inherited positive part of the spectrum lie in the kernel of $\bar L_{k+1}^{up}$; at the same time, the ``new'' up-term $\bar L_{k+1}^{up}$ has a new, non-inherited, part of the positive spectrum (which, in turn, lies in the kernel of the $(k+2)$-th down-term).

In particular, we notice that for $k = 0$, since $B_0=0$ and $\bar L_0=\bar L_0^{up}$, the  theorem yields $\sigma_+ (\bar L_0 ) = \sigma_+ (\bar{L_1}^{down}) \subseteq \sigma_+(\bar L_1)$. In other terms, the positive spectrum of the $\bar L_0$ is inherited by the spectrum of $\bar L_1$ and the remaining (non-inherited) part of $\sigma_+(\bar L_1)$ coincides with $\sigma_+(\bar L_1^{up})$. 
\Cref{fig:thm_spct_ill} provides an  illustration of the statement of  \Cref{thm:inherit} for $k = 0$.
  \begin{figure}[t]
    \centering
    { 
    \begin{tikzpicture}
      \node[draw] at (0,0) {0};
      \node[draw] at (0.5,0) {0};
      \node at (1, 0) {$\cdots$};
      \node[draw] at (1.5,0) {0};
      \node[draw, fill=bananamania] at (2.15,0) {\tiny{$\lambda_1$}};
      \node[draw, fill=bananamania] at (2.65,0) {\tiny{$\lambda_2$}};
      \node[draw, fill=burntsienna] at (3.15,0) {\tiny{$\lambda_3$}};
      \node[draw, fill=bananamania] at (3.65,0) {\tiny{$\lambda_4$}};
      \node[draw, fill=burntsienna] at (4.15,0) {\tiny{$\lambda_5$}};
      \node[draw, fill=burntsienna] at (4.65,0) {\tiny{$\lambda_6$}};
      \node[draw, fill=bananamania] at (5.15,0) {\tiny{$\lambda_7$}};
      \node[draw, fill=bananamania] at (5.65,0) {\tiny{$\lambda_8$}};
      \node[draw, fill=burntsienna] at (6.15,0) {\tiny{$\lambda_9$}};
      \node[draw, fill=bananamania] at (6.65,0) {\tiny{$\lambda_{10}$}};
      \node at (8.5, 0) {$\leftarrow \quad \sigma (\bar L_1)$\phantom{$\bar B_1$}};
    
      \node[draw] at (0,-0.7) {0};
      \node[draw] at (0.5,-0.7) {0};
      \node at (1, -0.7) {$\cdots$};
      \node[draw] at (1.5,-0.7) {0};
      \node[draw, fill=bananamania] at (2.15,-0.7) {\tiny{$\lambda_1$}};
      \node[draw, fill=bananamania] at (2.65,-0.7) {\tiny{$\lambda_2$}};
      \node[draw] at (3.15,-0.7) {0};
      \node[draw, fill=bananamania] at (3.65,-0.7) {\tiny{$\lambda_4$}};
      \node[draw] at (4.15,-0.7) {0};
      \node[draw] at (4.65,-0.7) {0};
      \node[draw, fill=bananamania] at (5.15,-0.7) {\tiny{$\lambda_7$}};
      \node[draw, fill=bananamania] at (5.65,-0.7) {\tiny{$\lambda_8$}};
      \node[draw] at (6.15,-0.7) {0};
      \node[draw, fill=bananamania] at (6.65,-0.7) {\tiny{$\lambda_{10}$}};
      \node at (8.5, -0.7) {$\leftarrow \quad \sigma (\bar B_1^T \bar B_1)$};
    
      \node[draw] at (0,-1.4) {0};
      \node[draw] at (0.5,-1.4) {0};
      \node at (1, -1.4) {$\cdots$};
      \node[draw] at (1.5,-1.4) {0};
      \node[draw] at (2.15,-1.4) {0};
      \node[draw] at (2.65,-1.4) {0};
      \node[draw, fill=burntsienna] at (3.15,-1.4) {\tiny{$\lambda_3$}};
      \node[draw] at (3.65,-1.4) {0};
      \node[draw, fill=burntsienna] at (4.15,-1.4) {\tiny{$\lambda_5$}};
      \node[draw, fill=burntsienna] at (4.65,-1.4) {\tiny{$\lambda_6$}};
      \node[draw] at (5.15,-1.4) {0};
      \node[draw] at (5.65,-1.4) {0};
      \node[draw, fill=burntsienna] at (6.15,-1.4) {\tiny{$\lambda_9$}};
      \node[draw] at (6.65,-1.4) {0};
      \node at (8.5, -1.4) {$\leftarrow \quad \sigma (\bar B_2 \bar B_2^T)$};
    
      \draw [
        thick,
        decoration={
            brace,
            mirror,
            raise=0.25cm
        },
        decorate
      ] (-0.25, -1.4) -- (1.75, -1.4) 
    node [pos=0.5,anchor=north,yshift=-0.25cm] {holes}; 
      \draw[pattern=north west lines] (1.75,0.4) rectangle (1.85, -1.8);
      \node[draw, align=center, fill=bananamania] at (1.8,-2.1) {$\mu$};
    \end{tikzpicture}
    }
    \caption{Illustration for the principal spectrum inheritance (\Cref{thm:inherit}) in case $k=0$: spectra of $\bar L_1$, $\bar L_1^{down}$ and $\bar L_1^{up}$ are shown. Colors signify the splitting of the spectrum, $\lambda_i>0 \in \sigma(\bar L_1)$ ; all yellow eigenvalues are inherited from $\sigma_+(\bar L_0)$; red eigenvalues belong to the non-inherited part. Dashed barrier $\mu$ signifies the penalization threshold (see the target functional in \Cref{subsec:functional}) preventing homological pollution (see \Cref{subsec:connetedness}). }
    \label{fig:thm_spct_ill}
  \end{figure}
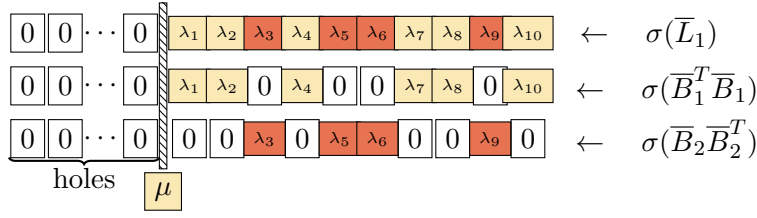

\section{Problem setting: Nearest complex with different homology}\label{sec:nearest_complex}
Suppose we are given a simplicial complex $\mc K$ on the vertex set $\mc V$, with simplex weight functions $w_0,w_1,\dots$, and let $\beta_k=\dim \mc H_k = \dim \bar{\mc H}_k$ the dimension of its $k$-homology. We aim at finding 
the closest simplex on the same vertex set $\mc V$, with a strictly larger number of holes. As it is the most frequent higher-order Laplacian appearing in applications and since this provides already a large number of numerical complications, we focus here only on the Hodge Laplacian case: given the simplicial complex $\mc K = (\mc V_0,\mc V_1,\mc V_2, \dots)$, we look for the smallest perturbation of the edges $\mc V_1$ which increases the dimension of $\mc H_1$. More precisely:

\begin{problem}\label{problem:1}
Let $\mc K$ be a simplicial complex of order at least $2$ with associated edge
weight function $w_1$ and corresponding diagonal weight matrix $W_1$, and let $\beta_1(W_1)$ be the dimension of the homology group corresponding to the weights in $W_1$. For  $\varepsilon>0$,  let 
\begin{equation*}
   \begin{aligned}
\Omega(\varepsilon) & =\Big\{\text{diagonal matrices $W$ such that $\|W\|= \varepsilon$}\Big\}, \\ 
\Pi(W_1) & =  \Big\{\text{diagonal matrices $W$ such that $W_1 + W \ge 0 $}\Big\}.
\end{aligned} 
\end{equation*}
In other words,  $\Omega(\eps)$ is an $\eps$-sphere and $\Pi(W_1)$ allows only non-negative simplex weights. We look for the smallest perturbation $\varepsilon$ such that there exists a weight modification $\delta W_1\in \Omega(\varepsilon) \cap \Pi(W_1)$ such that $\beta_1(W_1)<\beta_1(W_1+\delta W_1)$. 
\end{problem}
Here, and throughout the paper, $\|X\|$ denotes either the Frobenius norm if $X$ is a matrix, or the Euclidean norm if $X$ is a vector. Note that, as we are looking for the smallest $\eps$, the equality $\|W\|=\eps$ is an obvious choice, as opposed to $\|W\|\leq \eps$. 

As the dimension $\beta_1$ coincides with the dimension of the kernel of $\bar L_1$, we approach this problem through the minimization of a functional  based on the spectrum of the $0$-th and $1$-st Laplacian of the simplicial complex. In order to define such functional, we first make a number of considerations. 

Note that, due to \Cref{thm:wHomGroup}, the dimension of the first homology group does not change when the edge weights are perturbed, as long as all the weights remain positive. Thus, in order to find the desired perturbation $\delta W_1$, we need to set some of the initial weights to zero. This operation creates several potential issues we need to 
address, as discussed next.

First, setting an edge to zero implies that one is formally removing that edge from the simplicial complex. As the simplicial complex structure needs to be maintained, when doing so we need to set to zero also the weight of any $2$-simplex that contains that edge. For this reason, if $\tilde w_1(e) = w_1(e)+\delta w_1(e)$ is the new edge weight function, we require the weight function of the  $2$-simplices to change into $\tilde w_2$, defined as
\[
\tilde w_2(i_1i_2i_3) = f\left(\frac{\delta w_1(i_1i_2)}{w_1(i_1i_2)}, \frac{\delta w_1(i_2i_3)}{w_1(i_2i_3)}, \frac{\delta w_1(i_1i_3)}{w_1(i_1i_3)}\right) \cdot w_2(i_1i_2i_3)
\]
where $f(u_1,u_2,u_3)$ is a function such that $f(0,0,0)=1$ and that monotonically decreases to zero as $u_i\to -1$, for any $i=1,2,3$.
An example of such $f$ is 
\begin{equation}
\label{eq:trian_weights}
f(u_1,u_2,u_3) = 1-\min\{u_1,u_2,u_3\}\, .    
\end{equation}

Second, when setting the weight of an edge to zero we may disconnect the underlying graph and create an all-zero column and row in the Hodge Laplacian. This gives rise to the phenomenon that we call ``homological pollution'', which we will discuss in detail in the next subsection.

\subsection{Homological pollution: inherited almost disconnectedness}
\label{subsec:connetedness}

As the dimension of Hodge homology $\beta_1$ corresponds to the number of zero eigenvalues in $\bar L_1$,  the intuition suggests that if $\bar L_1$ has some eigenvalue that is close to zero, then the simplicial complex is ``close to'' having at least one more $1$-dimensional hole. There are a number of problems with this intuitive consideration.

By \Cref{thm:inherit} for $k=0$, $\sigma_+(\bar L_1)$ inherits $\sigma_+(\bar L_0)$. Hence, if the weights in $W_1$ vary continuously so that a positive eigenvalue in $\sigma_+(\bar L_0)$ approaches $0$, the same happens to $\sigma_+(\bar L_1)$. Assuming the initial graph $\mc G_{\mc K}$ is connected, an eigenvalue that approaches zero in $\sigma(\bar L_0)$ would 
imply that $\mc G_{{\mc K}}$ is approaching disconnectedness. This leads to a sort of  \emph{pollution} of the kernel of $\bar L_1$ as an almost-zero eigenvalue which corresponds to an ``almost'' $0$-dimensional hole (disconnected component) from $\bar L_0$ is inherited into the spectrum of $\bar L_1$, but this small eigenvalue of $\bar L_1$  does not correspond to the creation of a new $1$-dimensional hole in the reduced complex. 

To better explain the problem of homological pollution, let us consider the following illustrative example. 
\begin{example}\label{ex:connect}
Consider the simplicial complex of order 2  depicted in  \Cref{fig:cand_con_a}. In this example we have  $\mc V_0 = \{1,\dots,7\}$, $\mc V_1=\{[1,2],[1,3], [2,3], [2,4], [3,5], [4,5],$ $ [4,6], [5,6], [5,7], [6,7]\}$ and $\mc V_2 = \{ [1,2,3], [4, 5, 6], [5,6,7] \}$, all with  weight equal to one: $w_k\equiv 1$ for $k=0,1,2$. The only existing $1$-dimensional hole is shown in red and thus the corresponding Hodge homology is $\beta = 1$. Now, consider perturbing the weight of edges $[2,4]$ and $[3,5]$ by setting their weights to $\eps>0$  \Cref{fig:cand_con_b}. For small $\eps$, this perturbation implies that the smallest nonzero eigenvalue $\mu_2$ in $\sigma_+(\bar L_0)$  is scaled by $\eps$. As $\sigma_+(\bar L_0)\subseteq \sigma_+(\bar L_1)$, we have that  
$\dim \ker \bar L_1 = 1$ and $\sigma_+(\bar L_1)$ has an arbitrary small eigenvalue, approaching $0$ with $\eps \to 0$. 
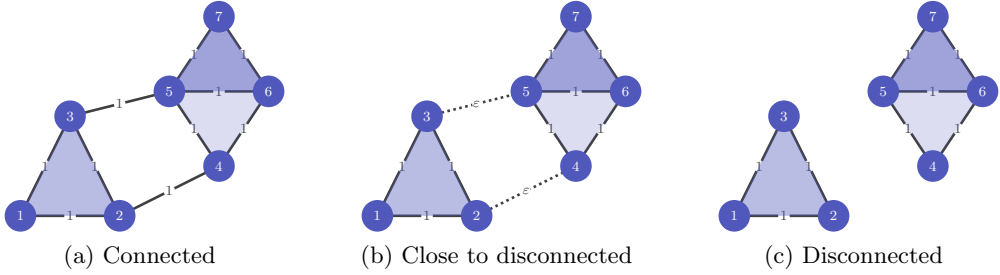
\begin{figure}[t]
    \centering
    { 

\subfloat[Connected]{\label{fig:cand_con_a}
\scalebox{0.66}{\begin{tikzpicture}
    \fill [opacity=0.4, liberty] (0, 0) -- (2, 0) -- (1, 2) -- cycle;
    \fill [opacity=0.2, liberty] (4, 1) -- (3, 2.5) -- (5, 2.5) -- cycle;
    \fill [opacity=0.55, liberty] (4, 4) -- (5, 2.5) -- (3, 2.5) -- cycle;
  \Vertex[x=0, y=0, label=1, style={color=liberty}, fontcolor=white,]{v1}
 \Vertex[x=2,y=0, label=2, style={color=liberty}, fontcolor=white,]{v2}
 \Vertex[x=1, y=2, label=3, style={color=liberty}, fontcolor=white,]{v3}
 \Vertex[x=4, y=1, label=4, style={color=liberty}, fontcolor=white,]{v4}
 \Vertex[x=3, y=2.5, label=5, style={color=liberty}, fontcolor=white,]{v5}
 \Vertex[x=5, y=2.5, label=6, style={color=liberty}, fontcolor=white,]{v6}
 \Vertex[x=4, y=4, label=7, style={color=liberty}, fontcolor=white,]{v7}
 \Edge[Math, label=1](v1)(v2)
 \Edge[Math, label=1](v1)(v3)
 \Edge[Math, label=1](v2)(v3)
 \Edge[Math, label=1](v2)(v4)
 \Edge[Math, label=1](v3)(v5)
 \Edge[Math, label=1](v4)(v5)
 \Edge[Math, label=1](v4)(v6)
 \Edge[Math, label=1](v5)(v6)
 \Edge[Math, label=1](v5)(v7)
 \Edge[Math, label=1](v6)(v7)
\end{tikzpicture}}} $\qquad$
\subfloat[Close to disconnected]{\label{fig:cand_con_b}
\scalebox{0.66}{\begin{tikzpicture}
       \fill [opacity=0.4, liberty] (0, 0) -- (2, 0) -- (1, 2) -- cycle;
    \fill [opacity=0.2, liberty] (4, 1) -- (3, 2.5) -- (5, 2.5) -- cycle;
    \fill [opacity=0.55, liberty] (4, 4) -- (5, 2.5) -- (3, 2.5) -- cycle;
  \Vertex[x=0, y=0, label=1, style={color=liberty}, fontcolor=white,]{v1}
 \Vertex[x=2,y=0, label=2, style={color=liberty}, fontcolor=white,]{v2}
 \Vertex[x=1, y=2, label=3, style={color=liberty}, fontcolor=white,]{v3}
 \Vertex[x=4, y=1, label=4, style={color=liberty}, fontcolor=white,]{v4}
 \Vertex[x=3, y=2.5, label=5, style={color=liberty}, fontcolor=white,]{v5}
 \Vertex[x=5, y=2.5, label=6, style={color=liberty}, fontcolor=white,]{v6}
 \Vertex[x=4, y=4, label=7, style={color=liberty}, fontcolor=white,]{v7}
 \Edge[Math, label=1](v1)(v2)
 \Edge[Math, label=1](v1)(v3)
 \Edge[Math, label=1](v2)(v3)
 \Edge[Math, label=\varepsilon, style={dotted}](v2)(v4)
 \Edge[Math, label=\varepsilon, style={dotted}](v3)(v5)
 \Edge[Math, label=1](v4)(v5)
 \Edge[Math, label=1](v4)(v6)
 \Edge[Math, label=1](v5)(v6)
 \Edge[Math, label=1](v5)(v7)
 \Edge[Math, label=1](v6)(v7)
\end{tikzpicture}}} $\qquad$
\subfloat[Disconnected]{\label{fig:cand_con_c}
\scalebox{0.66}{\begin{tikzpicture}
  \fill [opacity=0.4, liberty] (0, 0) -- (2, 0) -- (1, 2) -- cycle;
    \fill [opacity=0.2, liberty] (4, 1) -- (3, 2.5) -- (5, 2.5) -- cycle;
    \fill [opacity=0.55, liberty] (4, 4) -- (5, 2.5) -- (3, 2.5) -- cycle;
  \Vertex[x=0, y=0, label=1, style={color=liberty}, fontcolor=white,]{v1}
 \Vertex[x=2,y=0, label=2, style={color=liberty}, fontcolor=white,]{v2}
 \Vertex[x=1, y=2, label=3, style={color=liberty}, fontcolor=white,]{v3}
 \Vertex[x=4, y=1, label=4, style={color=liberty}, fontcolor=white,]{v4}
 \Vertex[x=3, y=2.5, label=5, style={color=liberty}, fontcolor=white,]{v5}
 \Vertex[x=5, y=2.5, label=6, style={color=liberty}, fontcolor=white,]{v6}
 \Vertex[x=4, y=4, label=7, style={color=liberty}, fontcolor=white,]{v7}
 \Edge[Math, label=1](v1)(v2)
 \Edge[Math, label=1](v1)(v3)
 \Edge[Math, label=1](v2)(v3)
 \Edge[Math, label=1](v4)(v5)
 \Edge[Math, label=1](v4)(v6)
 \Edge[Math, label=1](v5)(v6)
 \Edge[Math, label=1](v5)(v7)
 \Edge[Math, label=1](v6)(v7)
\end{tikzpicture}}}

    }
    \caption{
    Example of the homological pollution, \Cref{ex:connect}, for the simplicial complex $\mc K$ on $7$ vertices; the existing hole is $[2,3,4,5]$ (left and center pane), all $3$ cliques are included in the simplicial complex and shown in blue. The left pane demonstrates the initial setup with $1$ hole; the center pane retains the hole exhibiting spectral pollution; the continuous transition to the eliminated edges with $\beta_1 = 0$ (no holes) is shown on the right pane. 
    }
    \label{fig:candy_con}
\end{figure}
At the same time, when $\eps\to 0$, the reduced complex obtained by removing the zero edges as in \Cref{fig:cand_con_c} does not have any $1$-dimensional hole, i.e.\ $\beta_1 =0$. Thus, in this case, the presence of a very small eigenvalue $\mu_2 \in \sigma_+ (\bar L_1)$ does not imply that the simplicial complex is close to a simplicial complex with a larger dimension of the Hodge homology.  
\end{example}

To mitigate the phenomenon of homological pollution, in our spectral-based functional for  \Cref{problem:1} we include a term that penalizes the spectrum of $\bar L_0$ from approaching zero. To this end, we observe below that a careful choice of the vertex weights is required. 

The  smallest non-zero eigenvalue of the Laplacian $\mu_2  \in \sigma(\bar L_0)$ is directly related to the connectedness of the graph ${\mc G}_{\mc K}$. This relation is well-known and dates back to the pioneering work of Fiedler \cite{fiedler1989laplacian}.  In particular, as  $\mu_2$ is a function of  node and edge weights,  the following generalized version of the  Cheeger inequality holds~(see~e.g. \cite{tudisco2016})
\begin{equation}\label{eq:cheeger}
    \frac 1 2 \mu_2\leq h(\mc G_{\mc K}) \leq  \left( 2 \, \mu_2\,  \max_{i\in \mc V_0}\frac{\mathrm{deg}(i)}{w_0(i)} \right)^{1/2}, 
\end{equation}
where $h({\mc G}_{\mc K})$ is the Cheeger constant of the graph ${\mc G}_{\mc K}$, defined as 
\[
h({\mc G}_{\mc K})  = \min_{S \subset \mc V_0} \frac{  w_1(S, \mc V_0 \backslash S)    }{ \min \{w_0(S), w_0( \mc V_0 \backslash S )\}     } \, ,
\]
with 
$w_1(S, \mc V_0 \backslash S) = \sum_{ij\in \mc V_1 : i\in S, j\notin S}w_1(ij)$,  $\deg(i) = \sum_{j:ij\in \mc V_1}w_1(ij)$, and $w_0(S)=\sum_{i\in S}w_0(i)$. 

We immediately see from (\ref{eq:cheeger}) that when the graph ${\mc G}_{\mc K}$ is disconnected, then $h({\mc G}_{\mc K})=0$ and $\mu_2=0$ as well. Vice-versa, if $\mu_2$ goes to zero, then $h({\mc G}_{\mc K})$ decreases to zero too. 
While this happens independently of  $w_0$ and $w_1$, if $w_0$ is a function of $w_1$ then it might fail to capture the presence of edges whose weight is decreasing and is about to disconnect the graph.

To see this, consider the example choice  $w_0(i) = \deg(i)$, the degree of node $i$ in ${\mc G}_{\mc K}$. Note that this is  a very common choice in the graph  literature, with several useful properties, including the fact that no other graph-dependent constant appears in the Cheeger inequality (\ref{eq:cheeger}) other than $\mu_2$. For this weight choice, consider the case of a leaf node, a node $i\in \mc V_0$ that has only one edge $ij_0\in \mc V_1$ connecting $i$ to the rest of the (connected) graph ${\mc G}_{\mc K}$ via the node $j_0$. 
If we set $w_1(ij_0)=\varepsilon$ and we let $\varepsilon$ decrease to zero, the graph ${\mc G}_{\mc K}$ is approaching disconnectedness and we would expect $h({\mc G}_{\mc K})$ and $\mu_2$ to decrease as well. However, one easily verifies that both $\mu_2$ and $h({\mc G}_{\mc K})$ are constant with respect to $\varepsilon$ in this case, as long as $\varepsilon\neq 0$.

In order to avoid such a discontinuity, in our weight perturbation strategy for the simplex $\mc K$, if $w_0$ is a function of $w_1$, we perturb it by a constant shift. Precisely, if $w_0$ is the initial vertex weight of $\mc K$, we set $\tilde w_0(i) = w_0(i) + \varrho$, with $\varrho >0$. So, for example, if $w_0=\deg$ and the new edge weight function $\tilde w_1(e) = w_1(e)+\delta w_1(e)$ is formed after the addition of $\delta W_1$, we set $\tilde w_0(i) = \sum_{j} \left[ w_1(ij)+\delta w_1(ij)\right]+\varrho$.

\section{Spectral functional for $1$-st homological stability}
\label{subsec:functional}

We are now in the position to formulate our proposed spectral-based functional, whose minimization leads to the desired small perturbation that changes the first homology of $\mc K$. 
In the notation of \Cref{problem:1}, we are interested in the smallest perturbation $\eps$ and the corresponding modification $\delta W_1 \in \Omega(\eps) \cap \Pi(W_1)$ that increases $\beta_1$. 

As $\|\delta W_1\|=\eps$, for convenience we indicate $\delta W_1 = \eps E$ with $\| E \|=1$\, so $E \in \Omega(1) \cap \Pi_\eps(W_1)$, where  $\Pi_\eps(W_1) = \{W \mid \eps W \in \Pi(W_1)\}$. For the sake of simplicity, we will omit the dependencies and write $\Omega$ and $\Pi_\eps$, when there is no danger of ambiguity. 
Finally, let us denote by $\beta_1(\eps, E)$ the first Betti number corresponding to the simplicial complex perturbed via the edge modification $\eps E$. With this notation, we can reformulate \Cref{problem:1} as follows:

\begin{problem}\label{prob:main}
  Find the smallest $\eps > 0$,
  such that there exists an admissible perturbation $E \in \Omega \cap \Pi_\eps$ with
  an increased number of holes, i.e. 
  \begin{equation} \label{eq:min_1}
  \min\big\{\eps>0 \, :\, \beta_1(\eps, E)\geq \beta_1 + 1 \text{ for some } E \in \Omega \cap \Pi_\eps \big\}
  \end{equation}
  where $\beta_1=\beta_1(0,\cdot)$ is the first Betti number of the original simplicial complex.
  \end{problem}

  In order to approach \Cref{prob:main}, we introduce a target functional $F(\eps,E)$, based on the spectrum of the $1$-Laplacian $\bar L_1(\eps, E)$ and the $0$-Laplacian $\bar L_0(\eps, E)$, where the dependence on $\eps$ and $E$ is to emphasize the corresponding weight perturbation is of the form  $W_1 \mapsto W_1 + \eps E$. 
  
  Our goal is to move a positive entry from $\sigma_+( \bar L_1(\eps, E))$ to the kernel. At the same time,  assuming the initial graph $\mc G_{\mc K}$ is connected, one should avoid the inherited almost disconnectedness with small positive entries of $\sigma_+(\bar L_0 (\eps, E))$. As,  by  \Cref{thm:inherit}  for $k=0$, $\sigma_+(\bar L_0(\eps, E)) = \sigma_+ (\bar L_1^{down}(\eps, E))$,  the only eigenvalue of $\bar L_1(\eps, E)$ that can be continuously driven to $0$ comes from $\bar L_1^{up}(\eps, E)$.  
  For this reason, let us denote the \emph{first non-zero eigenvalue} of the up-Laplacian $\bar L_1^{up}(\eps, E)$ by $\lambda_+ (\eps, E)$. The proposed target functional is defined as:
  \begin{equation}
    \label{eq:functional}
    F(\eps, E) = \frac{ \lambda_+(\eps, E)^2}{ 2 } + \frac{\alpha}{2}  \max \left(0, 1 - \frac{\mu_2(\eps, E)}{\mu} \right)^2
  \end{equation}
where $\alpha$ and $\mu$ are positive parameters, and $\mu_2(\eps, E)$ is the first nonzero eigenvalue of $\bar L_0(\eps, E)$. As recalled in \Cref{subsec:connetedness},   $\mu_2(\eps, E)$ is an algebraic measure of the connectedness of the perturbed graph, thus the whole second term in (\ref{eq:functional}) ``activates'' when such algebraic connectedness falls below the threshold $\mu$.

By design, $F(\eps, E)$ is non-negative and is equal to  $0$ iff $\lambda_+(\eps, E)$ reaches $0$, increasing the  dimension of $\mc H_1$. Using this functional, we recast the \Cref{prob:main} as
\begin{equation}\label{eq:min_2}
  \min \left\{ \eps > 0 :  F(\eps, E) = 0 \text{ for some } E \in \Omega_\eps   \right\}
\end{equation}

\begin{remark}
  When $\mc G_{\mc K}$ is connected, $\dim \ker \bar L_0 = 1$ and, by the \Cref{thm:inherit}, $\dim \ker \bar L_1^{up} = \dim \ker \bar L_1 + ( n - \dim \ker \bar L_0)=n+\beta_1-1$, so the first nonzero eigenvalue of $\bar L_1^{up}$ is the ($n+\beta_1$)-th.
  While $(n+\beta_1)$ can be a large number in practice, we will discuss in \Cref{sec:computational_cost} an efficient method that allows us to  compute $\lambda_+(\eps, E)$ without computing  any of the previous $(n+\beta_1-1)$ eigenvalues. 
\end{remark}

\section{A two-level optimization procedure}
\label{sec:2lev}
  
We  approach problem (\ref{eq:min_2})  by the  means of a two-level iterative method, which is based on the successive 
minimization of the target functional $F(\eps, E)$ and a subsequent tuning of the parameter $\eps$. More precisely, we propose the following two-level scheme.  
  \begin{enumerate}[leftmargin=*]
	\item \textbf{Inner level:} for fixed $\eps > 0$,  solve the minimization problem
  \[
   E(\eps) = \underset{E \in \Omega \cap \Pi_\eps}{\mathrm{arg \, min}} \, F(\eps, E)  
  \]
 by a constrained gradient flow which we formulate below, where we denote the computed minimizer by $E(\eps)$.
    \item \textbf{Outer level:} given the function $\eps \mapsto E(\eps)$, we wish to solve the equation:
    \begin{equation} \label{eq:Fkeps}
      F(\eps, E(\eps)) = 0
    \end{equation}
    and our goal is to compute the smallest solution $\eps^* > 0$ of (\ref{eq:Fkeps}).
  \end{enumerate}
 A similar procedure was used in the context of graph spectral nearness in \cite{andreotti19,andreotti21}
and in other matrix nearness problems \cite{guglub22}.

We approach the inner level   by means of the constrained gradient system 
\begin{equation}
  \label{eq:traj_transition}
  \dot{E}(t)=-\mathbb{P}_{\Omega \cap \Pi_\eps} {G}(\eps, E(t)), \qquad G(\eps, E)=\nabla_{E} F(\eps, E)
\end{equation}
where $\mathbb{P}_{\Omega \cap \Pi_\eps}$ is an orthogonal projector (wrt to Frobenius inner product) 
onto the admissible set $\Omega \cap \Pi_\eps$ (where $\eps$ is fixed). 
Since the system integrates the anti-gradient, a minimizer  (at least local) of 
$F(\eps, E)$ is obtained as $t \to \infty$. 

We devote the next two \Cref{subsec:freegrad} and \Cref{subsec:constrained} to computing the projected gradient in (\ref{eq:traj_transition}). 
The idea is to express the derivative of $F$ in terms of the derivative of the perturbation and this way identifying 
the free gradient, and then projecting it onto the constraints obtaining the constrained gradient. 
To this end, we first compute the free gradient and then we discuss how to deal with the projection onto the admissible set. 

By construction, the resulting algorithm  converges to a (local) minimum of $F(\eps, E)$. Although global convergence  to the global optimum is not guaranteed, in a few simple experiments we  observe the method reaches the expected global solution.

\subsection{The free gradient}
\label{subsec:freegrad}
We compute here the free gradient of $F$ with respect to $E$, given a fixed $\eps$. In order to proceed, we need a few preliminary results.

The following is a standard perturbation result for
eigenvalues; see  e.g. \cite{HornJohnson90}, 
where we denote by $\langle X,Y \rangle =\sum_{i,j} x_{ij}y_{ij} = {\rm Tr}(X^\top Y)$ 
the inner product in ${\mathbb R}^{n\times n}$ that induces the Frobenius norm $\| X \| = \langle X,X \rangle^{1/2}$.

\begin{theorem}[Derivative of simple eigenvalues]\label{lem:eigderiv} 
Consider a continuously differentiable path of square symmetric matrices $A(t)$ for $t$ in an open interval $\mc I$. Let $\lambda(t)$, $t\in \mc I$, be a continuous path of simple eigenvalues of $A(t)$. Let $\vec x(t)$ be the eigenvector associated to the eigenvalue $\lambda(t)$ and assume
$\| \vec x(t) \| = 1$ for all $t$. 
Then $\lambda$ is continuously differentiable on $\mc I$ with the derivative (denoted by a dot) 
\begin{equation}
\dot{\lambda} = \vec x^\top \dot{A} \vec x = \langle \vec  x \vec x^\top, \dot A \rangle\,.
\end{equation}
Moreover, ``continuously differentiable'' can be replaced with ``analytic'' in the assumption and the conclusion.
\end{theorem}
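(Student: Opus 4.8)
The plan is to reduce the statement to differentiating the eigenvalue relation $A(t)\vec x(t)=\lambda(t)\vec x(t)$ together with the normalization $\vec x(t)^\top\vec x(t)=1$, and then to exploit the symmetry of $A$. The only genuine difficulty is \emph{justifying} that $\lambda(t)$, and a suitably chosen unit eigenvector $\vec x(t)$, are actually differentiable (not merely continuous) near any fixed $t_0\in\mc I$; once this regularity is granted, the derivative formula follows from a one-line computation. I therefore split the argument into a regularity step and an algebraic step.

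For the regularity step I would treat the eigenvalue first. Writing the characteristic polynomial $p(t,z)=\det(zI-A(t))$, its coefficients are polynomials in the entries of $A(t)$ and hence $C^1$ in $t$. Since $\lambda(t_0)$ is a \emph{simple} eigenvalue, it is a simple root of $z\mapsto p(t_0,z)$, so $\partial_z p(t_0,\lambda(t_0))\neq 0$; the implicit function theorem then produces a $C^1$ solution $t\mapsto\lambda(t)$ of $p(t,\lambda(t))=0$ near $t_0$, which by continuity coincides with the given branch. For the eigenvector I would use the Riesz spectral projector: for $t$ near $t_0$ the simple eigenvalue stays isolated from the rest of $\sigma(A(t))$, so $P(t)=\frac{1}{2\pi i}\oint_\Gamma (zI-A(t))^{-1}\,dz$, with $\Gamma$ a fixed small circle enclosing only $\lambda(t_0)$, is a rank-one $C^1$ matrix-valued function of $t$. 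Choosing $\vec x_0$ with $P(t_0)\vec x_0\neq 0$ and setting $\vec x(t)=P(t)\vec x_0/\|P(t)\vec x_0\|$ yields a $C^1$ unit eigenvector; any continuous unit eigenvector for $\lambda(t)$ differs from it only by a locally constant sign $\pm1$ (the eigenline being one-dimensional), hence is itself $C^1$. I would also note that $\vec x(t)\vec x(t)^\top=P(t)$ is intrinsic and sign-independent, so the right-hand side of the claimed formula is well defined even though $\vec x$ is only determined up to sign.

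The algebraic step is then routine. Differentiating $A\vec x=\lambda\vec x$ gives $\dot A\vec x+A\dot{\vec x}=\dot\lambda\,\vec x+\lambda\dot{\vec x}$, while differentiating $\vec x^\top\vec x=1$ gives $\vec x^\top\dot{\vec x}=0$. Left-multiplying the first identity by $\vec x^\top$ and using $\vec x^\top A=\lambda\vec x^\top$ (symmetry of $A$) yields
\[
\vec x^\top\dot A\vec x+\lambda\,\vec x^\top\dot{\vec x}=\dot\lambda\,\vec x^\top\vec x+\lambda\,\vec x^\top\dot{\vec x}.
\]
The two terms containing $\dot{\vec x}$ cancel and, since $\vec x^\top\vec x=1$, we obtain $\dot\lambda=\vec x^\top\dot A\vec x$. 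Finally $\vec x^\top\dot A\vec x=\Tr(\vec x\vec x^\top\dot A)=\scal{\vec x\vec x^\top,\dot A}$, which is the asserted form.

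For the analytic case I would simply replace the two regularity tools by their analytic counterparts: the analytic implicit function theorem for $\lambda(t)$, and analyticity of the resolvent (hence of $P(t)$) for the eigenvector, exactly as in Rellich's and Kato's analytic perturbation theory; the algebraic step is unchanged. The main obstacle, as indicated, is entirely in the regularity step — in particular securing a \emph{differentiable} (rather than merely continuous) eigenvector choice — whereas the derivative identity itself is immediate once differentiability is in hand.
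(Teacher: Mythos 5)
Your proof is correct, but there is nothing in the paper to compare it against: the paper states this result as ``a standard perturbation result for eigenvalues'' with a citation to Horn and Johnson's \emph{Matrix Analysis}, and gives no proof. Your argument supplies exactly what that citation conceals, and it follows the classical perturbation-theory route: the implicit function theorem applied to $p(t,z)=\det(zI-A(t))$ (legitimate because simplicity of $\lambda(t_0)$ gives $\partial_z p(t_0,\lambda(t_0))\neq 0$) upgrades the continuous eigenvalue branch to a $C^1$ one, and the Riesz spectral projector $P(t)$ produces a $C^1$ choice of unit eigenvector. Two points you handle that are often glossed over are worth highlighting: first, the identification of the given continuous branch with the IFT branch via local uniqueness of the root; second, the sign ambiguity of the eigenvector --- since the hypothesis provides only a normalized (not necessarily differentiable) $\vec x(t)$, your observation that $\vec x(t)\vec x(t)^\top=P(t)$ is intrinsic ensures the formula $\dot\lambda=\langle \vec x\vec x^\top,\dot A\rangle$ is independent of any admissible choice of $\vec x$, which is what makes the theorem as stated true rather than true only for a good eigenvector selection. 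The algebraic step (differentiate $A\vec x=\lambda\vec x$, left-multiply by $\vec x^\top$, use symmetry so the terms involving $\dot{\vec x}$ cancel, and invoke $\vec x^\top\vec x=1$) is exactly the standard computation, and substituting the analytic implicit function theorem and analyticity of the resolvent (Rellich/Kato) correctly delivers the analytic version. In short: the paper buys brevity by outsourcing the proof; your write-up buys self-containedness, and does so without gaps.
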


Let us denote the perturbed weight matrix by $\tilde W_1 (t) = W_1 + \eps E(t)$, and the corresponding $\tilde W_0 (t) = W_0(\tilde W_1(t))$ and $ \tilde W_2 (t) = W_2 (\tilde W_1(t) )$, defined accordingly as discussed in \Cref{sec:nearest_complex}. From now on
we omit the time dependence for the perturbed matrices to simplify the notation.  
Since $\tilde W_0$, $\tilde W_1$ and $\tilde W_2$ are necessarily diagonal, by the chain rule we have $\obullet{\tilde{W}}_i (t) = \eps \diag \left( J_1^i  \dot E \vec 1  \right)$, where $\vec 1$ is the vector of all ones, $\diag(\vec v)$ is the diagonal matrix with diagonal entries the vector $\vec v$, and  $J_1^i$ is the Jacobian matrix of the $i$-th weight matrix with respect to $\tilde W_1$, which for any $u_1\in \mathcal V_1$ and $u_2 \in \mathcal V_i$, has entries 
\(
    [J_1^i]_{u_1,u_2}=\frac{\partial }{\partial \tilde{w}_1{(u_1)}}\tilde{w}_i{(u_2)}\, .
\)

Next, in the following two lemmas, we express the time derivative of the Laplacian $\bar L_0$ and $\bar L_1^{up}$ as functions of $E(t)$. The proofs of these results are  straightforward and omitted for brevity. In what follows, $\Sym[A]$ denotes the symmetric part of the matrix $A$, namely $\Sym[A] = (A+A^\top)/2$.

\begin{lemma}[Derivative of $\bar L_0$]
  \label{lem:eigderL0} 
  For the simplicial complex $\mc K$ with the initial edges' weight matrix $W_1$ and  fixed perturbation norm $\eps$, let $E(t)$ be a smooth path and $\tilde W_0, \tilde W_1, \tilde W_2$ be corresponding perturbed weight matrices. Then,
  \begin{equation}
    \frac{1}{2\eps} \frac{d}{dt} \bar L_0 (t)  = \tilde W_0^{-1}  B_1 \tilde W_1 \dot E  B_1^\top \tilde W_0^{-1}-\Sym\left[ \tilde W_0^{-1} \diag \left(  J_1^0 \dot E \vec 1 \right)  \bar L_0 \right]. 
  \end{equation}
\end{lemma}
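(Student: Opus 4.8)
The plan is to differentiate the explicit matrix form of $\bar L_0$ factor by factor. Using \eqref{eq:normalized_L0} with the perturbed weights, I would write
\[
\bar L_0(t) = \tilde W_0^{-1} B_1 \tilde W_1^2 B_1^\top \tilde W_0^{-1},
\]
in which the boundary matrix $B_1$ is constant and only the three diagonal factors $\tilde W_0^{-1}$, $\tilde W_1^2$, and $\tilde W_0^{-1}$ carry the time dependence. Applying the product rule therefore splits $\frac{d}{dt}\bar L_0$ into three contributions, one per time-dependent factor. Throughout I would freely use that all the weight matrices are diagonal, hence symmetric and mutually commuting, together with the derivative formulas $\dot{\tilde W}_1 = \eps \dot E$ (immediate from $\tilde W_1 = W_1 + \eps E$) and $\dot{\tilde W}_0 = \eps \diag(J_1^0 \dot E \vec 1)$ established just above the statement.

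First I would handle the central factor. Since diagonal matrices commute, $\frac{d}{dt}\tilde W_1^2 = 2\tilde W_1 \dot{\tilde W}_1 = 2\eps\, \tilde W_1 \dot E$, so this contribution equals $2\eps\, \tilde W_0^{-1} B_1 \tilde W_1 \dot E B_1^\top \tilde W_0^{-1}$. After dividing by $2\eps$ this is exactly the first summand in the claimed identity, and no further manipulation is needed.

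Next I would treat the two outer factors via $\frac{d}{dt}\tilde W_0^{-1} = -\tilde W_0^{-1}\dot{\tilde W}_0\tilde W_0^{-1}$. The key bookkeeping observation is that in each of the two resulting contributions the surviving central block $\tilde W_0^{-1} B_1 \tilde W_1^2 B_1^\top \tilde W_0^{-1}$ is precisely $\bar L_0$ again, so after substituting $\dot{\tilde W}_0$ these terms collapse to $-\eps\, \tilde W_0^{-1}\diag(J_1^0 \dot E \vec 1)\,\bar L_0$ and $-\eps\,\bar L_0\,\diag(J_1^0 \dot E \vec 1)\,\tilde W_0^{-1}$.

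The only real subtlety, and the step I expect to require care, is recognizing that these two boundary contributions recombine into a symmetric part. Writing $D = \tilde W_0^{-1}\diag(J_1^0 \dot E \vec 1)$, which is a product of diagonal matrices and hence symmetric, and using $\bar L_0 = \bar L_0^\top$ together with the commutativity of the diagonal factors, the two terms become $-\eps\,(D\bar L_0 + \bar L_0 D) = -\eps\,(D\bar L_0 + (D\bar L_0)^\top) = -2\eps\,\Sym[D\bar L_0]$. Dividing the sum of all three contributions by $2\eps$ then yields $\tilde W_0^{-1} B_1 \tilde W_1 \dot E B_1^\top \tilde W_0^{-1} - \Sym[\tilde W_0^{-1}\diag(J_1^0\dot E\vec 1)\bar L_0]$, which is the asserted formula; everything else is routine computation.
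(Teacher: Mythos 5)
Your proof is correct: the product-rule differentiation of $\bar L_0(t) = \tilde W_0^{-1} B_1 \tilde W_1^2 B_1^\top \tilde W_0^{-1}$, using $\dot{\tilde W}_1 = \eps \dot E$, $\dot{\tilde W}_0 = \eps \diag(J_1^0 \dot E \vec 1)$, commutativity of the diagonal factors, and the recombination of the two outer terms into $-2\eps\,\Sym[\tilde W_0^{-1}\diag(J_1^0\dot E\vec 1)\bar L_0]$ via symmetry of $\bar L_0$, is exactly the ``straightforward'' computation the paper omits for brevity. Nothing is missing, and this is the same (and essentially the only natural) route to the stated identity.
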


\begin{lemma}[Derivative of $\bar L_1^{up}$]
  \label{lem:eigderL1}
  For the simplicial complex $\mc K$ with the initial edges' weight matrix $W_1$ and fixed perturbation norm $\eps$,  let $E(t)$ be a smooth path and $\tilde W_0, \tilde W_1, \tilde W_2$ be corresponding perturbed weight matrices.  Then, 
  \begin{equation*}
   \frac{1}{2\eps} \frac{d}{dt} \bar L^{up}_1 (t)  =   - \Sym \left[ \tilde W_1^{-1} B_2 \tilde W_2^2 B_2^\top \tilde W_1^{-1} \dot E \tilde W_1^{-1} \right] 
 + \tilde W_1^{-1} B_2 \tilde W_2 \diag\left(  J_1^0 \dot E \vec 1  \right) B_2^\top \tilde W_1^{-1}
  \end{equation*}
\end{lemma}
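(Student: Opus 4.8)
The plan is to differentiate the closed form of the up-Laplacian directly. By \Cref{def:normalized_L1} and \Cref{thm:inherit} we have $\bar L_1^{up} = \bar B_2 \bar B_2^\top = \tilde W_1^{-1} B_2 \tilde W_2^2 B_2^\top \tilde W_1^{-1}$, in which $B_2$ is a fixed incidence matrix and the only $t$-dependence sits in the diagonal weight matrices $\tilde W_1(t)$ and $\tilde W_2(t)$; note in particular that $\bar L_1^{up}$ does not involve $\tilde W_0$ at all. First I would apply the product rule to this expression, obtaining three groups of terms according to whether $\frac{d}{dt}$ acts on the leading $\tilde W_1^{-1}$, on the central $\tilde W_2^2$, or on the trailing $\tilde W_1^{-1}$.

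For the two outer factors I would use the inverse-derivative identity $\frac{d}{dt}\tilde W_1^{-1} = -\tilde W_1^{-1}\,\dot{\tilde W}_1\,\tilde W_1^{-1}$ together with $\dot{\tilde W}_1 = \eps\dot E$, which holds because the $1$-simplex weights are perturbed directly, $\tilde W_1 = W_1 + \eps E$ with $E$ diagonal (equivalently $J_1^1 = I$ in the chain-rule relation $\dot{\tilde W}_i = \eps\diag(J_1^i\dot E\vec 1)$ recalled before the lemma). For the central factor, since diagonal matrices commute, I would write $\frac{d}{dt}\tilde W_2^2 = 2\tilde W_2\,\dot{\tilde W}_2$ and substitute $\dot{\tilde W}_2 = \eps\diag(J_1^2\dot E\vec 1)$ from the same chain-rule relation, $J_1^2$ being the Jacobian of the $2$-simplex weights with respect to the $1$-simplex weights that encodes the dependence $\tilde W_2 = W_2(\tilde W_1)$ of \eqref{eq:trian_weights}.

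The one step requiring care is the symmetrization bookkeeping. Setting $A := \tilde W_1^{-1} B_2 \tilde W_2^2 B_2^\top \tilde W_1^{-1}\dot E\,\tilde W_1^{-1}$, differentiating the trailing inverse factor produces $-\eps A$, while differentiating the leading inverse factor produces $-\eps A^\top$; here one uses that $\tilde W_1$, $\tilde W_2$, $\dot E$ are symmetric and $(B_2^\top)^\top = B_2$, so that the second contribution is precisely the transpose of the first. Their sum is $-2\eps\,\Sym[A]$, which upon division by $2\eps$ yields the first term of the claimed identity, while the central-factor contribution $2\eps\,\tilde W_1^{-1} B_2 \tilde W_2 \diag(J_1^2\dot E\vec 1) B_2^\top \tilde W_1^{-1}$ divided by $2\eps$ yields the second term. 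I do not expect any genuine obstacle: the computation is routine once the inverse-derivative identity and the chain rule through $\tilde W_2 = W_2(\tilde W_1)$ are in place, the only real subtlety being the recognition that the two inverse-factor terms assemble into a single $\Sym[\cdot]$.
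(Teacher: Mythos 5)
Your proof is correct, and it is exactly the "straightforward" direct computation the paper omits for brevity: product rule on $\bar L_1^{up} = \tilde W_1^{-1} B_2 \tilde W_2^2 B_2^\top \tilde W_1^{-1}$, the inverse-derivative identity with $\dot{\tilde W}_1 = \eps \dot E$, the chain rule $\dot{\tilde W}_2 = \eps \diag(J_1^2 \dot E \vec 1)$, and assembly of the two outer-factor terms into $-2\eps\,\Sym[\cdot]$. One remark: your second term carries $J_1^2$ while the statement as printed reads $J_1^0$; this is a typo in the paper rather than a gap in your argument, since $\diag(J_1^0 \dot E \vec 1)$ is $|\mc V_0|\times|\mc V_0|$ and cannot be sandwiched between $B_2\tilde W_2$ and $B_2^\top$ (which requires a $|\mc V_2|\times|\mc V_2|$ factor), and the gradient formula in \Cref{thm:fk_grad} built from this lemma indeed uses ${J_1^2}^\top$ in the corresponding term.
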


Combining \Cref{lem:eigderiv} with  \Cref{lem:eigderL0} and \Cref{lem:eigderL1} we 
obtain the following expression for the free gradient of the functional.
\begin{theorem}[The free gradient of  $F(\eps, E)$]
  \label{thm:fk_grad}
  Assume the initial weight matrices $W_0$, $W_1$ and $W_2$,  as well as the parameters $\eps>0$, $\alpha>0$ and $\mu>0$, are given. 
	Additionally assume that $E(t)$ is a differentiable matrix-valued function such that the first non-zero eigenvalue $\lambda_+(\eps, E)$ of $\bar L_1^{up}(\eps, E)$ and the second smallest eigenvalue $\mu_2(\eps, E)$ of $\bar L_0(\eps, E)$  are simple.  Let $\tilde W_0, \tilde W_1, \tilde W_2$ be corresponding perturbed weight matrices; then: 
 \begin{equation*}
      \begin{aligned}
    \frac{1}{\eps} & \nabla_{E}  F(\eps, E)(t)   = \lambda_+(\eps,E) \bigcdot \\
    & \bigcdot \bigg[ \Sym \left[  - \tilde W_1^{-1} B_2 \tilde W_2^2 B_2^\top \tilde W_1^{-1} \vec x_+ \vec x_+^\top \tilde W_1^{-1}     \right]  \\  &+ \diag \left(  {J_1^2}^\top \vect \left( B_2^\top \tilde W_1^{-1} \vec x_+ \vec x_+^\top \tilde W_1^{-1} B_2 \tilde W_2   \right)   \right) \bigg] - \frac{\alpha}{\mu} \max \left\{ 0, 1- \frac{\mu_2(\eps, E)}{\mu }\right\}  \bigcdot \\
    & \bigcdot \bigg[  B_1^\top \tilde W_0^{-1} \vec y_2 \vec y_2^\top \tilde W_0^{-1} B_1 \tilde W_1    - \diag \left( {J_1^0}^\top  \vect \left( \Sym[ \tilde W_0^{-1} \vec y_2 \vec y_2^\top \bar L_0 ] \right) \right) \bigg]
  \end{aligned} 
 \end{equation*}
  where $\vec x_+$ is a unit eigenvector of $\bar L_1^{up}$ corresponding to $\lambda_+$,  $\vec y_2$ is a unit eigenvector of $\bar L_0$ corresponding to $\mu_2$, and the operator $\vect (X)$ returns the main diagonal of $X$ as a vector. 
\end{theorem}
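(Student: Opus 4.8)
The plan is to differentiate $F$ along the path $E(t)$, turn each eigenvalue derivative into a Frobenius pairing against $\dot E$ by means of the three preliminary results, and then read off $\nabla_E F$ from the identity $\frac{d}{dt}F(\eps,E(t)) = \langle \nabla_E F, \dot E\rangle$. First I would apply the chain rule to the definition \eqref{eq:functional}: since $\lambda_+$ and $\mu_2$ are assumed simple along the path they are differentiable, and
\[
  \frac{d}{dt}F = \lambda_+\,\dot\lambda_+ \;-\; \frac{\alpha}{\mu}\,\max\!\Big\{0,\,1-\tfrac{\mu_2}{\mu}\Big\}\,\dot\mu_2 ,
\]
where the factor $-\alpha/\mu$ in the second summand comes from differentiating the squared penalty on the region where it is active (where it is inactive its derivative vanishes and the displayed formula still holds with the $\max$). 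Next, \Cref{lem:eigderiv} applied to $A=\bar L_1^{up}$ and to $A=\bar L_0$ gives $\dot\lambda_+ = \langle \vec x_+\vec x_+^\top,\, \tfrac{d}{dt}\bar L_1^{up}\rangle$ and $\dot\mu_2 = \langle \vec y_2\vec y_2^\top,\, \tfrac{d}{dt}\bar L_0\rangle$, with $\vec x_+,\vec y_2$ the corresponding unit eigenvectors. Substituting the explicit derivatives from \Cref{lem:eigderL0} and \Cref{lem:eigderL1} then writes $\frac{d}{dt}F$ entirely in terms of $\dot E$.

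The core of the argument is to reduce the resulting trace expressions to the canonical form $\langle C, \dot E\rangle$, which splits into two kinds of terms. The terms that are linear in $\dot E$ of the shape $\langle \vec z\vec z^\top,\, A\,\dot E\,B\rangle$ I would handle by cyclicity of the trace, $\langle \vec z\vec z^\top, A\dot E B\rangle = \langle A^\top \vec z\vec z^\top B^\top, \dot E\rangle$, using the symmetry of the rank-one projectors $\vec z\vec z^\top$ to discard the $\Sym[\cdot]$ wrappers when pairing against them. The terms carrying the weight-sensitivity, of the shape $\langle \vec z\vec z^\top,\, A\,\diag(J_1^{i}\dot E\vec 1)\,B\rangle$, I would first cycle into $\langle M, \diag(J_1^{i}\dot E\vec 1)\rangle = \vect(M)^\top J_1^{i}\dot E\vec 1$ with $M = B^\top \vec z\vec z^\top A$, and then transpose the Jacobian and use that $\dot E$ is diagonal to rewrite them as $\langle \diag\!\big({J_1^{i}}^\top \vect(M)\big), \dot E\rangle$. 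Here the $\tilde W_2^2$ block of $\bar L_1^{up}$ contributes through $J_1^2$, while the $\tilde W_0^{-1}$ normalization of $\bar L_0$ contributes through $J_1^0$.

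Collecting the four resulting contributions, and inserting $\Sym[\cdot]$ freely — which is legitimate because $\dot E$ is symmetric, so $\langle C,\dot E\rangle = \langle \Sym[C],\dot E\rangle$ — yields the stated expression for $\nabla_E F$ after tracking the scalar constants (the $\eps$ and the constant prefactor in the Laplacian-derivative lemmas against the $1/\eps$ normalization in the statement). Finally, identifying the matrix paired against $\dot E$ with $\nabla_E F$ produces the two bracketed groups: the $\lambda_+$-weighted group from $\tfrac{d}{dt}\bar L_1^{up}$ and the penalty-weighted group from $\tfrac{d}{dt}\bar L_0$.

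I expect the main obstacle to be the bookkeeping in this last conversion rather than any conceptual difficulty: one must keep careful track of which factors are symmetric so that $\Sym$ may be inserted or removed at exactly the right moments, and propagate the weight-sensitivity consistently through the two distinct Jacobians $J_1^0$ and $J_1^2$, so that the $\vect(\cdot)$/$\diag(\cdot)$ duality lands precisely on the two $\diag({J_1^{i}}^\top\vect(\cdot))$ summands in the claim. Everything else is routine trace algebra.
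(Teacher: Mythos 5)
Your proposal is correct and follows essentially the same route as the paper's own (very terse) proof: chain rule on the functional, the simple-eigenvalue derivative formula paired with the rank-one projectors $\vec x_+\vec x_+^\top$ and $\vec y_2\vec y_2^\top$, substitution of \Cref{lem:eigderL0} and \Cref{lem:eigderL1}, cyclic trace manipulation, and the $\diag$/$\vect$ duality $\langle A, \diag(BE\vec 1)\rangle = \scal{\diag(B^\top\vect A), E}$, which you correctly derive inline (making explicit its reliance on $\dot E$ being diagonal, a point the paper leaves implicit). No gaps; your version is simply a more detailed account of the same argument.
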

\begin{proof}
To derive the expression for the gradient $\nabla_E F$, we exploit the chain rule for the time derivative: $\dot \lambda = \langle \frac{d}{dt} A(E(t)), \vec x \vec x^\top  \rangle = \langle \nabla_E \lambda, \dot E \rangle$. Then it is sufficient to apply the cyclic perturbation for the scalar products of \Cref{lem:eigderL0} and \Cref{lem:eigderL1} with $\vec x_+ \vec x_+^\top$ and $\vec y_2 \vec y_2^\top$ respectively. The final transition requires the formula:
 \begin{equation*}
   \left\langle A, \diag(B E \vec 1) \right\rangle = \scal{\diag \left(   B^\top (\vect A) \right) ,\, E}.
 \end{equation*}
\end{proof}
\begin{remark}
  The derivation above assumes the simplicity of both $\mu_2(\eps,E)$ and $\lambda_+(\eps,E)$. This assumption is not restrictive as simplicity for these extremal eigenvalues is a generic property. 
  Indeed we observe simplicity in all our numerical tests.  
\end{remark}

\subsection{The constrained gradient system and its stationary points}
\label{subsec:constrained}
In this section we are deriving from the free gradient determined in  \Cref{thm:fk_grad} the constrained gradient of the considered functional, that is the projected gradient (with respect to the Frobenius inner product) onto the manifold $\Omega \cap \Pi_\eps$,
which consists of perturbations $E$ of unit norm which preserve the structure of $W$.

In order to obtain the constrained gradient system, we need to project the unconstrained gradient given by 
\Cref{thm:fk_grad} onto the feasible set and also to normalize $E$ to preserve its unit norm. 
Using the Karush-Kuhn-Tucker conditions on a time interval where the set of $0$-weight edges remain unchanged, the projection is done via the mapping $\mathbb{P}_{+} G( \eps, E )$, where
\[
\left[\mathbb{P}_+ X \right]_{ij}=\begin{cases} X_{ij}, \quad \left[ W_1 + \eps E \right]_{ij}>0 \\ 0, \;\;\quad \text{otherwise} \end{cases}.
\]
Further, in order to comply with the constraint ${\| E(t) \|^2 =1}$, we must have 
\begin{equation} \label{eq:normconstr}
 0 = \frac12\,\frac d{dt}\| E(t) \|^2= \langle E(t), \dot E(t) \rangle.
\end{equation}
Thus, we obtain the following constrained optimization problem for the admissible direction of the steepest descent

\begin{lemma}[Direction of steepest admissible descent]
\label{lem:opt} 
Let $E,G\in{\mathbb R}^{m \times m}$ with $G$ given by (\ref{eq:traj_transition}), and 
${\|E\|=1}$. On a time interval where the set of $0$-weight edges remains unchanged, 
the gradient system  reads
\begin{equation}
  \label{eq:traj_proj}
  \dot{E}(t)=-\mathbb{P}_+ G( \eps, E(t) )+ \kappa  \mathbb{P}_+E(t), \quad \text{where} \quad 
	\kappa = \frac{\scal{ \eps, G(E(t) ),\, \mathbb{P}_+E(t)}}{  \lVert\mathbb{P}_+E(t)\rVert^2}.
 \end{equation}
\end{lemma}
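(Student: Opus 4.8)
The plan is to read the claimed formula as the orthogonal projection of the anti-gradient $-G(\eps,E)$ onto the tangent space of the admissible set $\Omega\cap\Pi_\eps$ at the current point $E$, and to derive it as the minimizer of a constrained steepest-descent problem. Along the flow the functional varies at rate $\tfrac{d}{dt}F=\langle G(\eps,E),\dot E\rangle$, so the steepest admissible descent direction is the $\dot E$ of prescribed norm that makes this inner product as negative as possible. Two constraints must be imposed: the unit-norm constraint $\|E(t)\|=1$, which by \eqref{eq:normconstr} is equivalent to $\langle E,\dot E\rangle=0$; and the non-negativity constraint $W_1+\eps E\ge 0$ encoded by $\Pi_\eps$. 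The first step is to observe that on a time interval where the set of $0$-weight edges is frozen, the active inequalities are fixed, so the feasible directions form the linear subspace $\mathrm{range}(\mathbb{P}_+)=\{Z: Z_{ij}=0\text{ whenever }[W_1+\eps E]_{ij}=0\}$; equivalently an admissible direction must satisfy $\mathbb{P}_+\dot E=\dot E$. Note that $\mathbb{P}_+$ is an orthogonal coordinate projector, hence idempotent and self-adjoint, a fact I will use repeatedly.

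I would then set up the constrained minimization of $\langle G,Z\rangle$ over $Z\in\mathrm{range}(\mathbb{P}_+)$ subject to $\|Z\|^2=1$ and $\langle E,Z\rangle=0$. Since $\langle G,Z\rangle=\langle\mathbb{P}_+G,Z\rangle$ for such $Z$, the KKT/Lagrangian stationarity condition with multipliers $\theta$ (norm) and $\kappa$ (orthogonality) reads $\mathbb{P}_+G+\theta Z-\kappa\,\mathbb{P}_+E=0$, giving $Z\propto -\mathbb{P}_+G+\kappa\,\mathbb{P}_+E$. Absorbing the positive multiplier $\theta$ into a reparametrization of the flow time produces exactly the right-hand side claimed in \eqref{eq:traj_proj}.

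It remains to fix $\kappa$ from the orthogonality constraint $\langle E,\dot E\rangle=0$. Substituting $\dot E=-\mathbb{P}_+G+\kappa\,\mathbb{P}_+E$ and using self-adjointness and idempotency of $\mathbb{P}_+$, namely $\langle E,\mathbb{P}_+G\rangle=\langle\mathbb{P}_+E,G\rangle$ and $\langle E,\mathbb{P}_+E\rangle=\|\mathbb{P}_+E\|^2$, yields $\kappa=\langle G,\mathbb{P}_+E\rangle/\|\mathbb{P}_+E\|^2$, which matches the stated value. As a consistency check I would verify directly that this $\dot E$ is indeed the projection $P_T(-G)$ for $T=\mathrm{range}(\mathbb{P}_+)\cap E^\perp$: the residual $-G-\dot E=-(I-\mathbb{P}_+)G-\kappa\,\mathbb{P}_+E$ is orthogonal to $T$, because $(I-\mathbb{P}_+)G\perp\mathrm{range}(\mathbb{P}_+)\supseteq T$ and $\langle\mathbb{P}_+E,Z\rangle=\langle E,Z\rangle=0$ for every $Z\in T$.

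The main obstacle is conceptual rather than computational: one must correctly model the inequality constraint so that, on each face determined by a fixed active set of zero-weight edges, it collapses to membership in the linear subspace $\mathrm{range}(\mathbb{P}_+)$; the self-adjoint, idempotent nature of $\mathbb{P}_+$ is exactly what decouples the two constraints into a clean ``project onto $\mathrm{range}(\mathbb{P}_+)$, then orthogonalize against $\mathbb{P}_+E$'' procedure. A minor point worth flagging is the normalization convention: the gradient flow uses the unnormalized projected anti-gradient (the multiplier $\theta$ is folded into the time scale), so the stated $\dot E$ need not itself be of unit norm even though the steepest-descent direction was derived under $\|Z\|=1$.
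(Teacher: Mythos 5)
Your proposal is correct and follows essentially the same route as the paper: posit the projected anti-gradient plus an orthogonality correction $\kappa\,\mathbb{P}_+E$, then determine $\kappa$ from $\langle E,\dot E\rangle=0$ using the self-adjointness and idempotency of $\mathbb{P}_+$, which is exactly the paper's computation. Your additional scaffolding --- the explicit KKT steepest-descent formulation (which the paper only alludes to when introducing \eqref{eq:traj_proju}) and the verification that the resulting $\dot E$ is the orthogonal projection of $-G$ onto $\mathrm{range}(\mathbb{P}_+)\cap E^\perp$ --- merely makes explicit what the paper leaves implicit, and your reading of the garbled $\kappa$ formula in the statement as $\scal{G(\eps,E(t)),\,\mathbb{P}_+E(t)}/\lVert\mathbb{P}_+E(t)\rVert^2$ is the intended one.
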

\begin{proof}
We need to orthogonalize $\dot{E}(t)$ with respect to $E(t)$. To this end, we introduce a linear orthogonality correction, i.e.\ we set  
$\dot E = \mathbb{P}_+ (-G - \kappa E)$, 
{and we determine} $\kappa$ {by imposing the} constraint $\langle E,\dot E \rangle =0$. We then have
\[
0=\langle E,\dot E \rangle = \langle E,\mathbb{P}_+ (-G - \kappa E) \rangle=
- \langle \mathbb{P}_+  E,G \rangle -  \kappa\langle \mathbb{P}_+  E,\mathbb{P}_+  E \rangle,
\]
and the result follows.
\end{proof}	
	
\Cref{eq:traj_proj} suggests that the system goes ``primarily'' in the direction of the antigradient $-G(\eps,E)$, thus the functional is expected to decrease along it.
\begin{lemma}[Monotonicity]
Let $E(t)$ of unit Frobenius norm satisfy the differential equation (\ref{eq:traj_proj}), with $G$ given by (\ref{eq:traj_transition}). Then, 
$F(\eps, E(t))$ decreases monotonically with~$t$.
\end{lemma}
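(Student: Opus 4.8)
The plan is to show that the time derivative of the functional along any trajectory solving \eqref{E-ode} is nonpositive. This is the standard Lyapunov-type argument for constrained gradient flows: we differentiate $F$ along the flow, invoke the chain rule to write $\frac{d}{dt}F(\eps,E(t)) = \langle G(\eps,E(t)), \dot E(t)\rangle$ where $G=\nabla_E F$ is the free gradient computed in \Cref{thm:fk_grad}, and then substitute the prescribed velocity field $\dot E = \mathbb{P}_+(-G-\kappa E)$.

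First I would establish the chain rule step carefully. Because the flow is constrained to the set where the pattern of $0$-weight edges is fixed, the effective gradient driving the dynamics only sees the components of $G$ surviving the projection $\mathbb{P}_+$. The key algebraic fact is that $\mathbb{P}_+$ is an orthogonal projection with respect to the Frobenius inner product (it simply zeroes out a fixed set of entries), so it is self-adjoint and idempotent: $\langle \mathbb{P}_+ X, Y\rangle = \langle X, \mathbb{P}_+ Y\rangle$ and $\mathbb{P}_+^2=\mathbb{P}_+$. Since $\dot E = \mathbb{P}_+(\cdots)$ lies in the range of $\mathbb{P}_+$, and since the functional's variation through the admissible directions is what matters, I would write
\begin{align*}
\frac{d}{dt}F(\eps,E(t)) &= \langle G, \dot E\rangle = \langle G, \mathbb{P}_+(-G-\kappa E)\rangle = \langle \mathbb{P}_+ G, -G-\kappa E\rangle.
\end{align*}

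Next I would substitute the value of $\kappa$ from \Cref{lem:opt}, namely $\kappa = \langle \mathbb{P}_+ G, \mathbb{P}_+ E\rangle / \lVert \mathbb{P}_+ E\rVert^2$, and expand. Using self-adjointness and idempotence of $\mathbb{P}_+$ repeatedly, the expression collapses to
\begin{align*}
\frac{d}{dt}F(\eps,E(t)) &= -\lVert \mathbb{P}_+ G\rVert^2 - \kappa\langle \mathbb{P}_+ G, \mathbb{P}_+ E\rangle = -\lVert \mathbb{P}_+ G\rVert^2 + \frac{\langle \mathbb{P}_+ G, \mathbb{P}_+ E\rangle^2}{\lVert \mathbb{P}_+ E\rVert^2}.
\end{align*}
By the Cauchy--Schwarz inequality the second term is bounded above by $\lVert \mathbb{P}_+ G\rVert^2$, so the whole quantity is $\leq 0$, which gives monotonic decrease.

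The main obstacle I anticipate is justifying the chain rule $\frac{d}{dt}F = \langle G,\dot E\rangle$ rigorously, because $F$ depends on $E$ through the eigenvalues $\lambda_+$ and $\mu_2$, which are only differentiable where they remain simple and where the active set in the $\max$ term does not switch. I would therefore restrict attention, as the hypotheses of \Cref{thm:fk_grad} already do, to a time interval on which $\lambda_+(\eps,E)$ and $\mu_2(\eps,E)$ are simple, the $0$-weight edge pattern is constant, and the penalty term is either uniformly active or uniformly inactive; on such an interval \Cref{lem:eigderiv} guarantees the eigenvalues are continuously differentiable and the gradient formula of \Cref{thm:fk_grad} is valid, so the chain rule applies verbatim. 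A minor secondary point is confirming that the kink in the $\max$ function does not spoil monotonicity: at the activation threshold the penalty contributes a one-sided derivative, but since the $\max$ is convex and the whole functional remains nonincreasing on each side, the argument extends across the switching instant by continuity.
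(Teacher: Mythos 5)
Your proof is correct and is essentially the paper's own argument: differentiate $F$ along the flow via the chain rule, substitute the constrained velocity with its orthogonality correction, and close with Cauchy--Schwarz. The one substantive difference is that you carry the projection $\mathbb{P}_+$ through the whole computation (using its self-adjointness and idempotence), whereas the paper's proof only writes out the case where $\mathbb{P}_+$ acts as the identity and then remarks that the inequality persists on time intervals where the support of $\mathbb{P}_+$ is unchanged; your version is therefore slightly more complete, and your attention to the differentiability caveats (simple eigenvalues, fixed active set, the kink of the $\max$ term) likewise goes beyond what the paper records. One bookkeeping slip to fix: the multiplier you quote, $\kappa=\langle \mathbb{P}_+G,\mathbb{P}_+E\rangle/\lVert\mathbb{P}_+E\rVert^2$, is the $\kappa$ of \Cref{lem:opt} for the form $\dot E=-\mathbb{P}_+G+\kappa\,\mathbb{P}_+E$ of \eqref{eq:traj_proj}, while you write the flow in the form \eqref{E-ode}, $\dot E=\mathbb{P}_+(-G-\kappa E)$, whose constraint-derived multiplier is $-\langle \mathbb{P}_+G,\mathbb{P}_+E\rangle/\lVert\mathbb{P}_+E\rVert^2$; your final displayed identity is the one consistent with the latter, and in any case both sign choices give $\tfrac{d}{dt}F\le 0$ (one term-by-term, the other via Cauchy--Schwarz), so the conclusion is unaffected.
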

\begin{proof}
We consider first the simpler case where the non-negativity projection does not apply so that $G=G(\eps,E)$ (without $\mathbb{P}_+$). Then
\begin{equation}\label{eq:mono}
    \begin{aligned} 
  \frac{d}{dt} F(\eps, E)(t) & =\scal{  \nabla_{E} F_k(\eps, E), \, \dot{E} }=\scal{ \eps G( \eps, E(t) ) ,\, -G( \eps, E(t) )+\kappa E(t) } \\ 
  & = -\eps \lVert G( \eps, E ) \rVert^2 + \eps\frac{\scal{G( \eps, E ),\, E }}{\scal{E, \, E}} \scal{G( \eps, E ),\, E }  \\
  & = \eps \left(  -\lVert G( \eps, E ) \rVert^2 + \frac{\left| \scal{G( \eps, E ),\, E } \right|^2}{\lVert E \rVert^2} \right) \le 0 
\end{aligned}
\end{equation}
where the final estimate is given by the Cauchy-Bunyakovsky-Schwarz inequality. The derived inequality holds on the time interval without the change in the support of $\mathbb P_+$ (so that no new edges are prohibited by the non-negativity projection).
\end{proof}

\subsection{Free Gradient Transition in the Outer Level}\label{sec:freegrad}

The  optimization problem in the \emph{inner level} is non-convex due to the non-convexity  
of the functional $F(\eps, E)$.
Hence, for a given $\eps$, the computed minimizer $E(\eps)$ may depend on the initial guess $E_0 = E_0(\eps)$.

The effects of the initial choice are particularly important upon the transition 
$ \widehat\eps \to \eps = \widehat\eps + \Delta \eps $ between constrained inner levels: 
given reasonably small $\Delta \eps$, one should expect relatively close optimizers 
$E(\widehat \eps)$ and $E(\eps )$, and, hence, the initial guess 
$E_0(\eps)$ being close to and dependent on $E(\eps)$. 

This choice, which seems very natural, determines however a discontinuity
\[
F(\widehat \eps, E(\widehat \eps)) \ne F(\eps, E(\widehat \eps)),
\]
which   may prevent the expected monotonicity property with respect to $\eps$ in the
(likely unusual case) where $F(\widehat \eps, E(\widehat \eps)) < F(\eps, E(\widehat \eps))$. 
This may happen in particular when $\Delta \eps$ is not taken small; since the choice of $\Delta \eps$
is driven by a Newton-like iteration we are interested in finding a way to prevent this
situation and making the whole iterative method more robust. The goal is that of guaranteeing 
monotonicity of the functional both with respect to time and with respect to $\eps$.

\begin{figure}[t]
  \centering
  { 
  \scalebox{0.9}{\begin{tikzpicture}
  \draw[dashed] (0,0) -- (4.2, 0);
            \draw[dashed] (0,0) -- (0, 4.2);
            \draw[line width=1.5, gray] (2,0) arc (0:90:2.0);
            \draw[line width=1.5, gray] (4,0) arc (0:90:4.0);
            \draw[line width=3, color=liberty, postaction={on each segment={mid arrow=liberty}}] (1.42, 1.42) arc(45:20:2.0);
            \node[anchor = north] at (4, 0){ \tiny{ \( \| \delta W_1 \| = \eps + \Delta \eps \) } };
            \node[anchor = north] at (2, 0){ \tiny{ \( \| \delta W_1 \| = \eps \) } };
            \draw[line width=3, color=burntsienna, postaction={on each segment={mid arrow=burntsienna}}] (1.88, 0.684) arc(-90:-30:2.0);
            \draw[line width=3, color=liberty, postaction={on each segment={mid arrow=liberty}}] (3.61, 1.7) arc(25:60:4);
            \draw[line width=3, color=burntsienna, postaction={on each segment={mid arrow=burntsienna}}] (2, 3.5) arc(160:120:2.0);
            \node[color=liberty, align = center, execute at begin node=\setlength{\baselineskip}{1.5ex}] at (1, 2.7){ \footnotesize constrained flow \\
            \footnotesize  \( \| E(t) \| \equiv 1 \) };
            \node[color=burntsienna, align = center, execute at begin node=\setlength{\baselineskip}{1.5ex}] at (3.5, 3.5){ \footnotesize free flow \\
            \footnotesize  \( \| E(t) \| \uparrow \) };
            \node[circle, black, fill, scale=0.4] at (1.88, 0.684){};
            \node[circle, black, fill, scale=0.4] at (2, 3.5){};
            \node[anchor = east] at (1.88, 0.684){ \tiny \( \| \nabla F \| =0 \) };
            \node[anchor =  east] at (2, 3.4){ \tiny \( \| \nabla F \| =0 \) };
            \draw[color=liberty] (1.5, 2.35) -- (1.75, 1.25);
            \draw[color=liberty] (1.9, 2.6) -- (3., 2.25);
            \draw[color=burntsienna] (2.8, 3.5) -- (2.4, 3.7);
            \draw[color=burntsienna] (3.5, 3.1) -- (3.2, 1.5);
\end{tikzpicture}}
  }

  \caption{
  The scheme of alternating constrained (blue, $\| E(t) \| \equiv 1 $) and free gradient (red) flows. Each stage inherits the final iteration of the previous stage as initial $E_0(\eps_i)$ or $\tilde E_0(\eps_i)$ respectively; constrained gradient is integrated till the stationary point ($\| \nabla F(E) \| = 0 $), free gradient is integrated until $ \| \delta W_1 \| = \eps_i + \Delta \eps$. The scheme alternates until the target functional vanishes ($F(\eps, E) =0 $).
  }
  \label{fig:homotopy}
\end{figure}
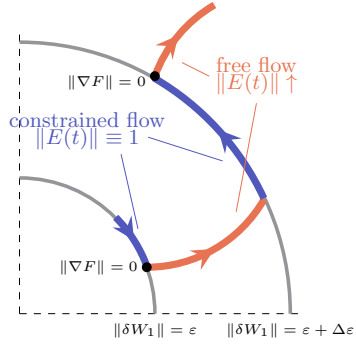

When in the outer iteration we increase $\eps$ from a previous value $\widehat{\eps} < \eps$, we have the problem
of choosing a suitable initial value for the constrained gradient system (\ref{eq:traj_proj}), such that at the
stationary point $E(\widehat \eps)$ we have $F(\widehat \eps, E(\widehat \eps)) < F(\eps, E(\eps))$ (which we assume both
positive, that is on the left of the value $\eps^\star$ which identifies the closest zero of the functional).

In order to maintain monotonicity with respect to time and also with respect to $\eps$, it is convenient to start 
to look at the optimization problem with value $\eps$, with the
initial datum $\delta W_1 = \widehat{\eps} E(\widehat \eps)$ of norm $\widehat \eps < \eps$. 

This means we have essentially to optimize with respect to the inequality constraint $\|\delta W_1 \| \le \eps$,
or equivalently solve the problem (now with inequality constrain on $\| E \|_F$):
\[
   E(\eps) = \underset{ E \in \Omega, \| E \| \le 1}{\mathrm{arg \, min}} \, F(\eps, E)  
\]

The situation changes only slightly from the one discussed above. 
If $\|E\| < 1$, every direction is admissible, and the direction of the steepest descent 
is given by the negative gradient. So we choose the free gradient flow
\begin{equation}\label{ode-E-free}
\dot E = -\mathbb{P}_+ G( \eps, E(t) ) \qquad\text{as long as }\ \| E(t) \| < 1.
\end{equation}
When  $\|E(t)\|=1$, then there are two possible cases. If 
$\langle \mathbb{P}_+ G( \eps, E ), E \rangle \ge 0$, then the solution of (\ref{ode-E-free}) has
\[
\frac d{dt} \|E(t)\|^2 = 2 \,\langle \dot E, E \rangle= -2\, \langle \mathbb{P}_+ G( \eps, E(t) ), E \rangle \le 0,
\]
and hence the solution of (\ref{ode-E-free}) remains of Frobenius norm at most 1. 

Otherwise, if $\langle \mathbb{P}_+ G( \eps, E ), E \rangle < 0$, 
the admissible direction of steepest descent is given by the right-hand side of (\ref{eq:traj_proj}), 
and so we choose this ODE to evolve $E$. 
The situation can be summarized as taking, if $\|E(t)\|=1$,
\begin{equation} \label{ode-E-mu}
\dot E = -\mathbb{P}_+ G( \eps, E ) + \mu E \quad\text{ with }\ \mu=\min\bigl(0,\kappa \bigr)
\end{equation}
with $\kappa = \scal{ G( \eps, E ),\, \mathbb{P}_+E}/  \lVert\mathbb{P}_+E\rVert^2$. 
Along the solutions of (\ref{ode-E-mu}), the functional $F$ decays monotonically, and stationary points of (\ref{ode-E-mu}) (i.e.\ points such that $\dot E=0$) with
$\mathbb{P}_+ G( \eps, E(t) ) \ne 0$ are characterized by
\begin{equation}\label{stat-neg}
\text{$E$ is a {\it negative} real multiple of $\mathbb{P}_+ G( \eps, E(t) )$.}
\end{equation}
If it can be excluded that the gradient $\mathbb{P}_+ G( \eps, E(t) )$ vanishes at an optimizer, 
it can thus be concluded that the optimizer of the problem with inequality constraints is a stationary point of the gradient flow (\ref{eq:traj_proj}) for the problem with equality constraints.

\begin{remark}
  As a result, $F(\eps, E(t))$  monotonically decreases both with respect to time $t$ and to the value of the norm 
  $\eps$, when $\eps \le \eps^\star$. 
\end{remark}

\section{Algorithm  details}\label{sec:algorithm}
In \Cref{algo:complete}  we provide the pseudo-code of the whole bi-level procedure. The initial ``$\alpha$-phase'' is used to choose an appropriate value for the regularization parameter $\alpha$. In order to avoid the case in which the penalizing term on the right-hand side of (\ref{eq:functional}) dominates the loss $F(\varepsilon, E(t))$ in the early stages of the descent flow, we select $\alpha$ by first running such an initial phase, prior to the main alternated constrained/free gradient loop. In this phase,  we fix a small $\eps = \eps_0$  and run the constrained gradient integration for an initial  $\alpha=\alpha_*$. After the computation of a local optimum $E_*$, we then increase $\alpha$ and rerun for the same $\eps_0$ with $E_*$ as starting point. We iterate until no change in $E_*$ is observed or until $\alpha$ reaches an upper bound $\alpha^*$. 

The resulting value of $\alpha$ and $E_*$ are then used in the main loop where we first increase $\eps$ by the chosen step size, we rescale $E_i$ by $0<\eps/(\eps+\Delta \eps)<1$, and then we perform the free gradient integration described in  \Cref{sec:freegrad} until we reach a new point $E_i$ on the unit sphere $\|E_i\|=1$. Then, we perform the inner constrained gradient step by integrating 
\Cref{eq:traj_proj}, iterating the following two-step norm-corrected Euler scheme: 
\begin{equation}
  \label{eq:euler_step}
  \begin{cases}
    E_{i+1/2} = E_i - h_i \left( \mathbb{P}_+ G(E_i, \eps) - \kappa_i \mathbb{P}_+ E_i \right)\, .
    \\
    E_{i+1}=\mathbb P_{\Pi_\eps} E_{i+1/2} / \|\mathbb P_{\Pi_\eps} E_{i+1/2}\| 
  \end{cases}
\end{equation}
where the second step is necessary to numerically guarantee the Euler integration remains in the set of admissible flows since the discretization does not conserve the norm and larger steps $h_i$ may violate the non-negativity of the weights. 
\begin{algorithm}[t]
\caption{Pseudo-code of the complete constrained- and free-gradient flow.}
\label{algo:complete}
\begin{algorithmic}[1]
\Require initial edge perturbation guess $ E_0 $;
initial $ \eps_0 >0$;
$\eps$-stepsize $ \Delta \eps > 0$;
bounds $ \alpha_*, \, \alpha^* $ for the $\alpha$-phase; 
\State $\alpha, E \gets \texttt{AlphaPhase}(E_0, \eps_0,  \alpha_*, \alpha^*)$ \Comment{for details see \Cref{sec:experiments} }
\While{ $ | F(\eps, E) | < 10^{-6} $ }
\State $\eps \gets \eps + \Delta \eps $
\State $ E \gets \frac{\eps}{\eps + \Delta \eps} E $ \Comment{ before the free gradient $\| E \| < 1 $ }
\State $ E_i \gets \texttt{FreeGradientFlow}(E, \Delta \eps, \eps)$  \Comment{
see \Cref{sec:freegrad} }
\State $ E \gets \texttt{ConstrainedGradientFlow}(E, \eps)$  \Comment{see \Cref{sec:algorithm}}
\EndWhile
\end{algorithmic}
\end{algorithm}

In both the free and constrained integration phases, since we aim to obtain the solution at $t \to \infty$ instead of the exact trajectory, we favor larger steps $h_i$ given that the established monotonicity is conserved. Specifically, if $F(\eps, E_{i+1}) < F(\eps, E_i)$, then the step is \emph{accepted} and we set $h_{i+1} = \beta h_i$ with $\beta >1$; otherwise, the step is \emph{rejected} and repeated with  a smaller step $h_i \gets h_i/\beta$.

\begin{remark}
  \label{rem:accelerate}
    The step  acceleration strategy described above, where $\beta h_i$ is immediately increased after one accepted step, may lead to ``oscillations'' between accepted and rejected steps in the event the method would prefer to maintain the current step size $h_i$. To avoid this potential issue, in our experiments we actually increase the step length after two consecutive accepted steps. Alternative step-length selection strategies are also possible, for example, based on Armijo's rule or  non-monotone stabilization techniques \cite{grippo1991class}.
\end{remark}

\subsection{Computational costs} \label{sec:computational_cost}
Each step of either the free or the constrained flows
requires one step of explicit Euler integration along the anti-gradient $-\nabla_E F(\eps, E(t))$. As discussed in  \Cref{sec:2lev}, the  construction of such a gradient requires several sparse and diagonal matrix-vector multiplications  as well as the computation of the smallest nonzero eigenvalue of both $\bar L_1^{up}(\eps, E)$ and $\bar L_0(\eps, E)$. The latter two represent the  major computational requirements of the numerical procedure. Fortunately, as both matrices are of the form $A^\top A$, with $A$ being either of the two boundary or co-boundary operators  $\bar B_2$ and $\bar B_1^\top$,  we have that both the two eigenvalue problems boil down to a problem of the form
\[
\min_{\vec x\bot \ker A}\frac{\|A\vec x\|}{\|\vec x\|}
\]
i.e., the computation of the smallest singular value of the sparse matrix $A$. This problem can be approached by a sparse singular value solver based on a Krylov subspace scheme for the pseudo inverse of $A^\top A$. In practice, we implement the pseudo inversion by solving the corresponding least squares problems
\begin{equation*}
         \min_{\vec x} \| \bar L_1^{up}(\eps, E) \vec x - \vec  b \|, \qquad \min_{\vec x} \| \bar L_0(\eps, E) \vec x - \vec  b \|\, ,
    \end{equation*}
which, in our experiments, we solved using the least square minimal-residual method (LSMR) from \cite{fong2011lsmr}. This approach allows us to use a  preconditioner for the normal equation corresponding to the least square problem. For simplicity, in our tests we used a constant preconditioner computed by means of an incomplete Cholesky factorization of  the initial unperturbed  $\bar L_1^{up}$, or $\bar L_0$. Possibly, much better performance can be achieved with a tailored preconditioner that is efficiently updated throughout the matrix flow. We also note  that the eigenvalue problem for the graph Laplacian $\bar L_0(\eps, E)$ may be alternatively approached by a combinatorial multigrid strategy \cite{spielman2014nearly}. However, designing a suitable preconditioning strategy goes  beyond the scope of this work and will be the subject of future investigations.

\section{Numerical experiments}\label{sec:experiments}
In this section, we provide several synthetic and real-world example applications of the proposed stability algorithm. The code for all the examples is available at \href{https://github.com/COMPiLELab/HOLaGraF}{https://github.com/COMPiLELab/HOLaGraF}. All experiments are run until the global stopping criterion 
$|F(\varepsilon,E(t))|<10^{-6}$ is met. The parameters $\mu$ and $\alpha$ are chosen as follows. Concerning $\mu$, 
we set $\mu = 0.75 \mu_2$, where $\mu_2$ is the smallest nonzero eigenvalue of the initial Laplacian $\bar L_0$. As for $\alpha$, we  run the $\alpha$-phase described in  \Cref{sec:algorithm} with parameters $\eps_0 = 10^{-3}$, $\alpha_*=1$ and $\alpha^*=100$.

\subsection{Illustrative Example}\label{sec:illustrarive_example}

We consider here a small example 
of a simplicial complex $\mc K$ of order 2 
consisting of eight 0-simplices (vertices), twelve 1-simplices (edges),  four 2-simplices $\mc V_2 = \{[1,2,3], [1,2,8], [4,5,6],[5,6,7]\}$   and one corresponding hole $[2,3,4,5]$, hence, $\beta_1 = 1$. By design, the dimensionality of the homology group $\bar{\mc H}_1$ can be increased only by eliminating edges $[1,2]$ or $[5,6]$; for the chosen weight profile  $w_1([1,2]) > w_1([5,6])$, hence, the method should converge to the minimal perturbation norm $\eps = w_1([5,6])$ by eliminating the edge $[5,6]$, \Cref{fig:illustrative_start}.

\begin{figure}[t]
    \centering
    \scalebox{0.7}{\begin{tikzpicture}
            \fill [opacity=0.25,liberty]    (0, 0) -- (2, 0) --  (1, 2) -- cycle;
             \fill [opacity=0.3,liberty]    (0, 0) -- (2, 0) --  (1, -2) -- cycle;
            \fill [opacity=0.5,liberty]    (4, 1) -- (3, 2.5) --  (5, 2.5) -- cycle;
            \fill [opacity=0.65,liberty]    (3, 2.5) -- (5, 2.5) --  (4, 4) -- cycle;
            \Vertex[x=0, y=0, style={color=liberty}, fontcolor=white, size=0.4, label = 6 ]{v1}
            \Vertex[x=2,y=0, style={color=liberty}, fontcolor=white, size=0.4, label = 5 ]{v2}
            \Vertex[x=1, y=2, style={color=liberty}, fontcolor=white, size=0.4, label=4]{v3}
            \Vertex[x=4, y=1, style={color=liberty}, fontcolor=white, size=0.4, label=3]{v4}
            \Vertex[x=3, y=2.5, style={color=liberty}, fontcolor=white, size=0.4, label=2]{v5}
            \Vertex[x=5, y=2.5, style={color=liberty}, fontcolor=white, size=0.4, label=1]{v6}
            \Vertex[x=4, y=4, style={color=liberty}, fontcolor=white, size=0.4, label=8]{v7}
            \Vertex[x=1, y=-2, style={liberty}, fontcolor=white, size = 0.4, label = 7]{v8}
            \Edge[Math, label=1](v1)(v2)
            \Edge[Math, label=0.6, lw=0.6pt](v1)(v3)
            \Edge[Math, label=2.2, lw=2.2pt](v2)(v3)
            \Edge[Math, label=0.7, lw=0.7](v2)(v4)
            \Edge[Math, label=2.5, lw=2.5](v3)(v5)
            \Edge[Math, label=1.8, lw=1.8](v4)(v5)
            \Edge[Math, label=1](v4)(v6)
            \Edge[Math, label=1.5, lw=1.5](v5)(v6)
            \Edge[Math, label=1.75, lw=1.75](v5)(v7)
            \Edge[Math, label=2, lw=2](v6)(v7)
            \Edge[Math, label=0.5,  lw =0.5pt](v1)(v8)
             \Edge[Math, label=3.0,  lw=3.0pt](v2)(v8)
        \end{tikzpicture}}
        \scalebox{0.7}{\begin{tikzpicture}
             \node at (-1, 0.5) {\large $\longrightarrow$};
            \fill [opacity=0.5,liberty]    (4, 1) -- (3, 2.5) --  (5, 2.5) -- cycle;
            \fill [opacity=0.65,liberty]    (3, 2.5) -- (5, 2.5) --  (4, 4) -- cycle;
            \Vertex[x=0, y=0, style={color=liberty}, fontcolor=white, size=0.4, label = 6 ]{v1}
            \Vertex[x=2,y=0, style={color=liberty}, fontcolor=white, size=0.4, label = 5 ]{v2}
            \Vertex[x=1, y=2, style={color=liberty}, fontcolor=white, size=0.4, label=4]{v3}
            \Vertex[x=4, y=1, style={color=liberty}, fontcolor=white, size=0.4, label=3]{v4}
            \Vertex[x=3, y=2.5, style={color=liberty}, fontcolor=white, size=0.4, label=2]{v5}
            \Vertex[x=5, y=2.5, style={color=liberty}, fontcolor=white, size=0.4, label=1]{v6}
            \Vertex[x=4, y=4, style={color=liberty}, fontcolor=white, size=0.4, label=8]{v7}
            \Vertex[x=1, y=-2, style={liberty}, fontcolor=white, size = 0.4, label = 7]{v8}
            \Edge[Math, label=0.6, lw=0.6pt](v1)(v3)
            \Edge[Math, label=2.2, lw=2.2pt](v2)(v3)
            \Edge[Math, label=0.7, lw=0.7](v2)(v4)
            \Edge[Math, label=2.5, lw=2.5](v3)(v5)
            \Edge[Math, label=1.8, lw=1.8](v4)(v5)
            \Edge[Math, label=1](v4)(v6)
            \Edge[Math, label=1.5, lw=1.5](v5)(v6)
            \Edge[Math, label=1.75, lw=1.75](v5)(v7)
            \Edge[Math, label=2, lw=2](v6)(v7)
            \Edge[Math, label=0.5,  lw =0.5pt](v1)(v8)
             \Edge[Math, label=3.0,  lw=3.0pt](v2)(v8)
        \end{tikzpicture}}
    \caption{
    Simplicial complex $\mc K$ on $8$ vertices for the illustrative run (on the left): all 2-simplices from $\mc V_2$ are shown in blue, the weight of each edge $w_1(e_i)$ is given on the figure. On the right: perturbed simplicial complex $\mc K$ through the elimination of the edge $[5,6]$ creating additional hole $[5, 6, 7, 8]$.
    \label{fig:illustrative_start}
    }
\end{figure}
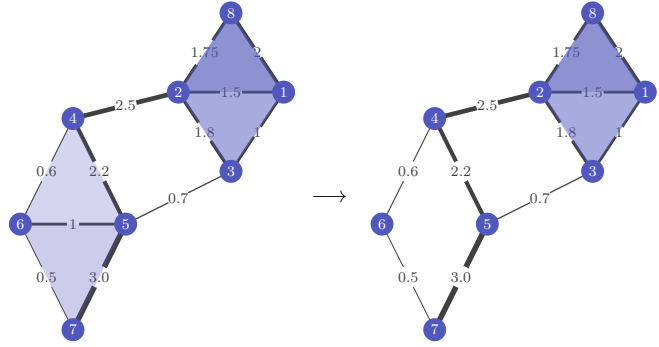

\begin{figure}[t]
    \centering
    \includegraphics[width=1.0\textwidth,clip,trim=8pt 7pt 8pt 5pt]{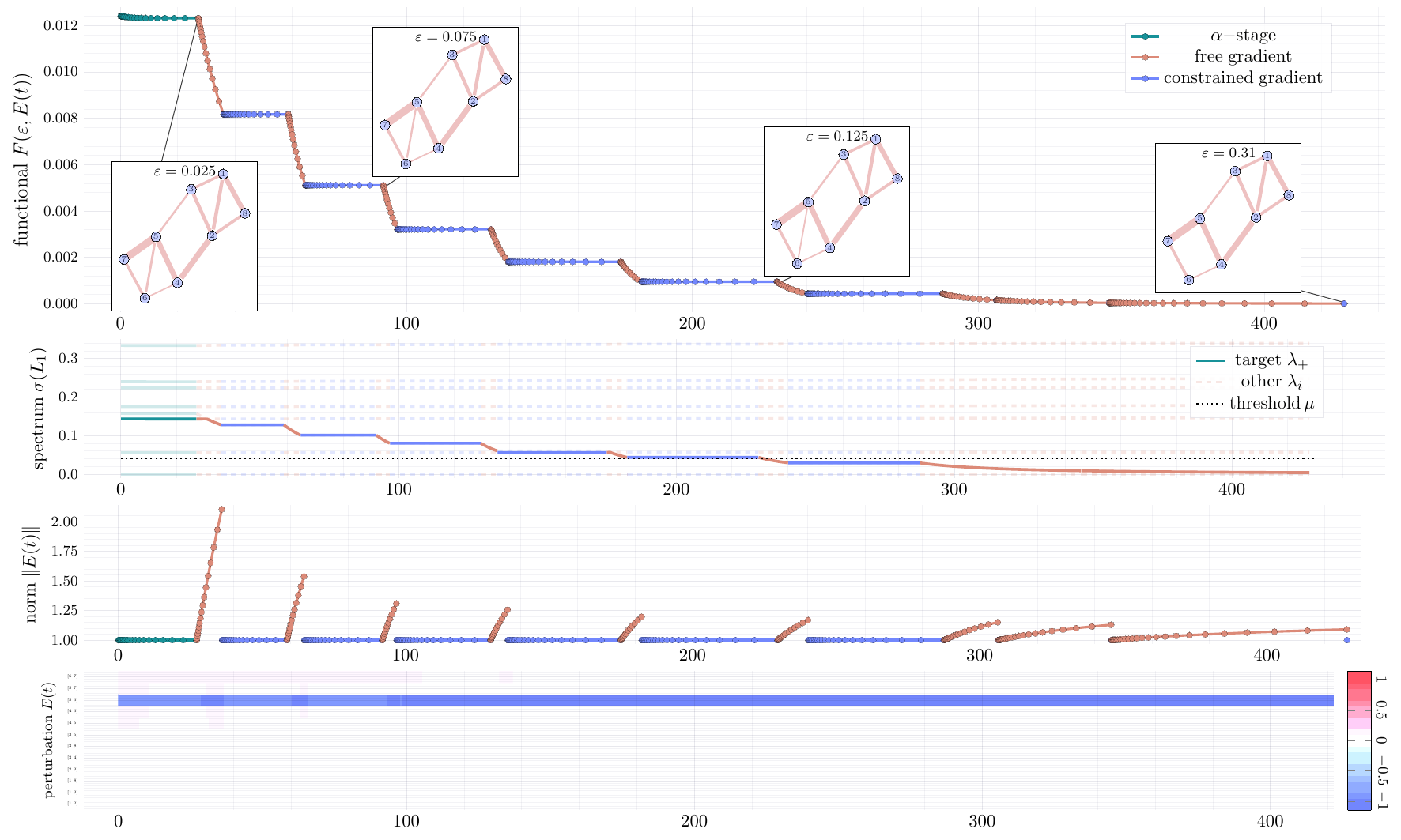}
    
    \caption{Illustrative run of the framework determining the topological stability: the top pane --- the flow of the functional $F(\eps, E(t))$; the second pane --- the flow of $\sigma(\bar L_1)$, $\lambda_+$ is highlighted; third pane --- the change of the perturbation norm $\| E(t) \|$; the bottom pane --- the heatmap of the perturbation profile $E(t)$.
     \label{fig:illustrative}
    }
\end{figure}

The exemplary run of the optimization framework in time is shown on \Cref{fig:illustrative}.
The top panel of \Cref{fig:illustrative} provides the continued flow of the target functional $F(\eps, E(t))$ consisting of the initial $\alpha$-phase (in green) and alternated constrained (in blue) and free gradient (in orange) stages. As stated above, $F(\eps, E(t))$ is strictly monotonic along the flow since the support of $\mathbb P_+$  does not change. Since the initial setup is not pathological with respect to the connectivity, the initial  $\alpha$-phase essentially reduces to a single constrained gradient flow and terminates after one run with $\alpha=\alpha_*$.  The constrained gradient stages are characterized by a slow changing $E(t)$, which is essentially due to the flow performing small adjustments to find the correct rotation on the unit sphere, whereas the free gradient stage quickly decreases the target functional.

The second panel shows the  behaviour of first non-zero eigenvalue $\lambda_+(\eps, E(t))$ (solid line) of $\bar L_1^{up}(\eps, E(t))$ dropping through the ranks of $\sigma(\bar L_1(\eps, E(t)))$ (semi-transparent); similar to the case of the target functional $F(\eps, E(t))$, $\lambda_+(\eps, E(t))$ monotonically decreases. The rest of the eigenvalues exhibit only minor changes, and the rapidly changing $\lambda_+$ successfully passes through the connectivity threshold $\mu$ (dotted line). 

The third and the fourth panels show the evolution of the norm of the perturbation $\| E(t) \|$ and the perturbation $E(t)$ itself, respectively.  The norm $\| E(t) \| $ is conserved during the constrained-gradient and the $\alpha$- stages; these stages correspond to the optimization of the perturbation shape, as shown by the small positive values at the beginning of the bottom panel which eventually vanish. During the free gradient integration the norm $\| E(t) \|$ increases, but the relative change of the norm declines with the growth of $\eps_i$ to avoid jumping over the smallest possible $\eps$. Finally, due to the simplicity of the complex, the  edge we want to eliminate, $56$, dominates the flow from the very beginning (see bottom panel); such a clear pattern persists only in small examples, whereas for large networks the perturbation profile is initially spread out among all the edges.

\subsection{Triangulation Benchmark}

To provide more insight into the computational behavior of the method, we synthesize here an almost planar 
graph dataset. Namely, we assume $N$ uniformly sampled vertices on the unit square with a network built by the Delaunay triangulation;  then, edges are randomly added or erased to obtain the sparsity $\nu$  (so that the graph has $\frac 12  \nu N(N-1)$ edges overall). An order-2 simplicial complex $\mc K=(\mc V_0,\mc V_1,\mc V_2)$  is then formed by letting $\mc V_0$ be the generated vertices, $\mc V_1$ the edges, and $\mc V_2$ every $3$-clique of the graph; edges' weights are sampled uniformly between $1/4$ and $3/4$, namely $w_1(e_i) \sim U [\frac{1}{4}, \frac{3}{4} ]$.

An  example of such triangulation is shown in \Cref{fig:triang}; here $N=8$ and edges $[6, 8]$ and $[2, 7]$ were eliminated to achieve the desired sparsity.

\begin{figure}[t]
    \centering
    \subfloat[Example of Triangulation and Holes]{ \label{fig:triang}
        \scalebox{0.45}{\begin{tikzpicture}
        \Vertex[x=0, y=0, label=1, style={color=liberty}, fontcolor=white,]{v1}
        \Vertex[x=5, y=0, label=2, style={color=liberty}, fontcolor=white,]{v2}
        \Vertex[x=5, y=5, label=3, style={color=liberty}, fontcolor=white,]{v3}
        \Vertex[x=0, y=5, label=4, style={color=liberty}, fontcolor=white,]{v4}
        \Vertex[x=2, y=1.5, label=5, style={color=liberty}, fontcolor=white,]{v5}
        \Vertex[x=1, y=4, label=6, style={color=liberty}, fontcolor=white,]{v6}
        \Vertex[x=4, y=2.5, label=7, style={color=liberty}, fontcolor=white,]{v7}
        \Vertex[x=3, y=3.5, label=8, style={color=liberty}, fontcolor=white,]{v8}
        \Edge(v1)(v2)
        \Edge(v2)(v3)
        \Edge(v3)(v4)
        \Edge(v1)(v4)
        \Edge(v1)(v5)
        \Edge(v2)(v5)
        \Edge(v3)(v7)
        \Edge(v5)(v7)
        \Edge(v7)(v8)
        \Edge(v5)(v8)
        \Edge(v5)(v6)
        \Edge(v3)(v8)
        \Edge(v4)(v6)
        \Edge(v1)(v6)
        \Edge(v3)(v6)

        \fill [opacity=0.3,liberty]   (v1.center) -- (v2.center) -- (v5.center) -- cycle;
        \fill [opacity=0.3,liberty]   (v1.center) -- (v4.center) -- (v6.center) -- cycle;
        \fill [opacity=0.3,liberty]   (v1.center) -- (v5.center) -- (v6.center) -- cycle;
        \fill [opacity=0.3,liberty]   (v3.center) -- (v4.center) -- (v6.center) -- cycle;
        \fill [opacity=0.3,liberty]   (v5.center) -- (v7.center) -- (v8.center) -- cycle;
        \fill [opacity=0.3,liberty]   (v3.center) -- (v7.center) -- (v8.center) -- cycle;
        
        \Vertex[x=0, y=-3.5, style={color=white}]{fake}
    \end{tikzpicture}}
    }
    \subfloat[Time (in seconds)]{\label{fig:triang_time}
       \scalebox{0.4}{
        \includegraphics{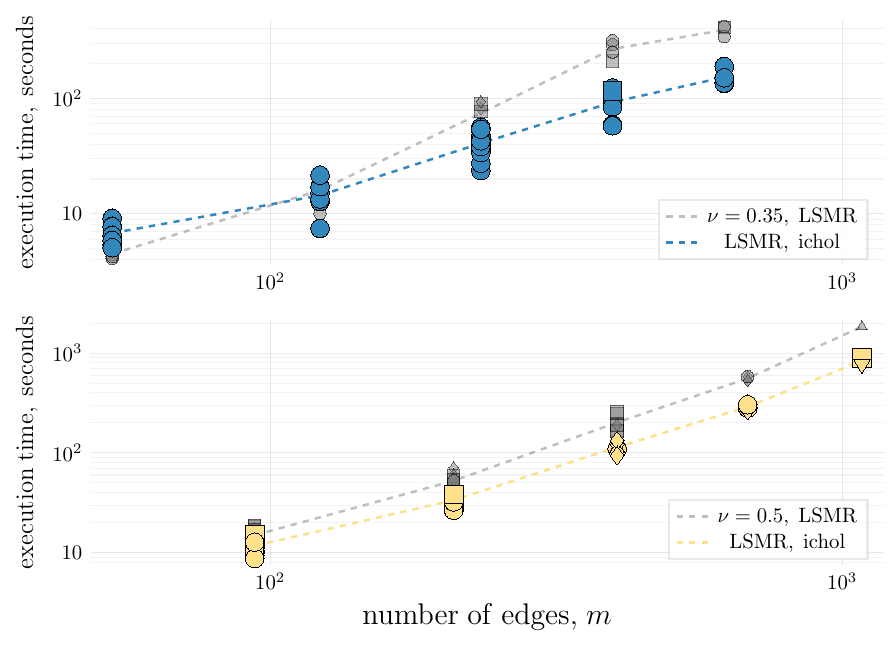}
       }}
    \subfloat[Perturbation norm, $\eps$]{\label{fig:triang_eps}
        \scalebox{0.4}{
        \includegraphics{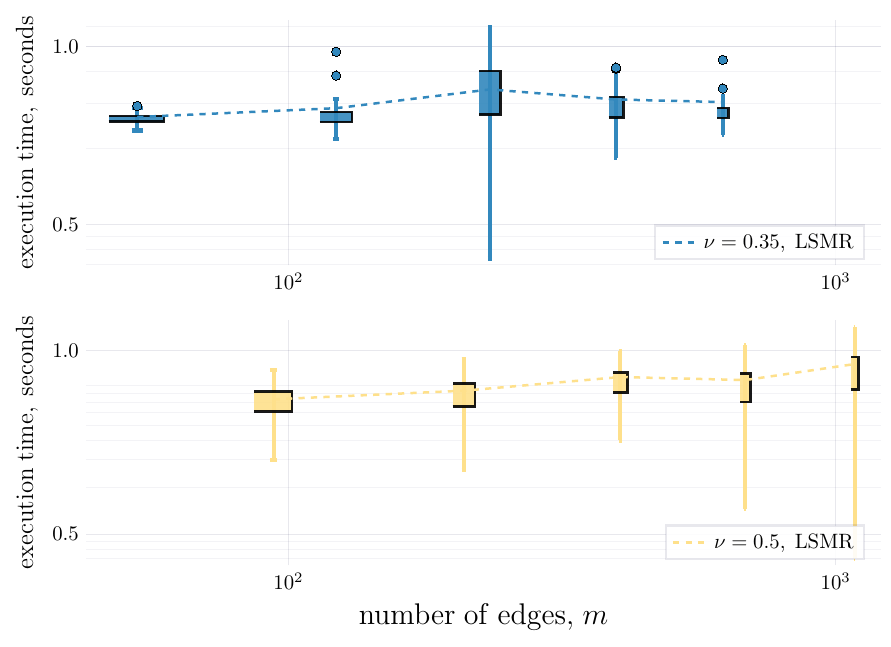}
        }}
        \caption{Benchmarking Results on the Synthetic Triangulation Dataset: varying sparsities $\nu=0.35, \, 0.5$ and $N=16, \, 22, \, 28, \, 34, \, 40$; each network is sampled $10$ times. Shapes correspond to the number of eliminated edges in the final perturbation: $1: \, \ocircle$, $2: \, \square$, $3: \pentagon$, $4: \, \triangle$. For each pair $(\nu, N)$, the un-preconditioned and Cholesky-preconditioned execution times are shown.}
        \vspace{-10pt}
\end{figure}

We sample networks with a varying number of vertices $N=10, \, 16, \, 22, \, 28$ and varying sparsity pattern $\nu=0.35, \, 0.5$ which determine the number of edges in the output  as $m = \nu \frac{N(N-1)}{2}$. Due to the highly randomized procedure, topological structures of a sampled graph with a fixed pair of parameters may differ substantially, so $10$ networks with the same $(N, \nu)$ pair are generated. For each network, the working time  (without considering the sampling itself) and the resulted perturbation norm $\eps$, and are reported in  \Cref{fig:triang_time} and \Cref{fig:triang_eps}, respectively. As anticipated in  \Cref{sec:computational_cost}, we show the performance of two implementations of the method, one based on LSMR and one based on LSMR preconditioned by using  the  incomplete Cholesky factorization of the initial matrices. We observe that, 
\begin{itemize}
    \item  the computational cost of the whole procedure  lies between $\mathcal O(m^2)$ and $\mathcal O(m^3)$ 
    \item denser structures, with a higher number of vertices, result in the higher number of edges being eliminated; at the same time, even most dense cases still can exhibit structures requiring the elimination of a single edge, showing that  the flow does not necessarily favor multi-edge optima;
    \item the required perturbation norm $\eps$ is growing with the size of the graph, \Cref{fig:triang_eps}, but not too fast: it is expected that denser networks would require larger $\eps$ to create a new hole; at the same time if the perturbation were to grow drastically with the sparsity $\nu$, it would imply that the method tries to eliminate sufficiently more edges, a behavior that resembles convergence to a sub-optimal perturbation;
    \item preconditioning with a constant incomplete Cholesky multiplier, computed for the initial Laplacians, 
    provides a visible execution time gain for medium and large networks. Since the quality of the preconditioning deteriorates as the flow approaches the minimizer (as a non-zero eigenvalue becomes $0$), it is worth investigating the design of a preconditioner for the up-Laplacian that can be efficiently updated.
\end{itemize}

\subsection{Transportation Networks}

Finally, we provide an application to  real-world examples based on city transportation networks. 
We consider networks for Bologna, Anaheim, Berlin Mitte, and Berlin Tiergarten; each network consists of nodes --- intersections/public transport stops --- connected by edges (roads) and subdivided into zones; for each road the free flow time, length, speed limit are known; moreover, the travel demand for each pair of nodes is provided through the dataset of recorded trips.
All the datasets used here are publicly available at \url{https://github.com/bstabler/TransportationNetworks}; Bologna network is  provided by the Physic Department of the University of Bologna (enriched through the Google Maps API \url{https://developers.google.com/maps}).

The regularity of city maps naturally lacks $3$-cliques, hence forming the simplicial complex based on triangulations as done before frequently leads to trivial outcomes. 
Instead, here we ``lift'' the network to city zones, thus more effectively grouping the nodes in the graph. Specifically:
\begin{enumerate}
    \item we consider the completely connected graph where the nodes are zones in the city/region;
    \item the free flow time between two zones is temporarily assigned  as a weight of each edge: the time is as the shortest path between the zones (by the classic Dijkstra algorithm) 
    on the initial graph;
    \item similarly to what is done in the filtration used for persistent homology,  we filter out excessively distant nodes; additionally, we exclude the longest edges in each triangle in case it is equal to the sum of two other edges (so the triangle is degenerate and the trip by the longest edge is always performed through to others); 
    \item finally, we use the travel demand as an actual weight of the edges in the final network; travel demands are scaled \emph{logarithmically} via the transformation $w_i \mapsto \log_{10} \left(  \frac{w_i}{0.95 \min w_i} \right)$; see the example on the left panel of \Cref{fig:bologna}.
\end{enumerate}
Given the definition of weights in the network, high instability (corresponding to small perturbation norm $\eps$) implies structural phenomena around the ``almost-hole'', where the faster and shorter route is sufficiently less demanded. 

\begin{figure}
    \centering
    \scalebox{1.0}[0.9]{\includegraphics[width=1.0\columnwidth]{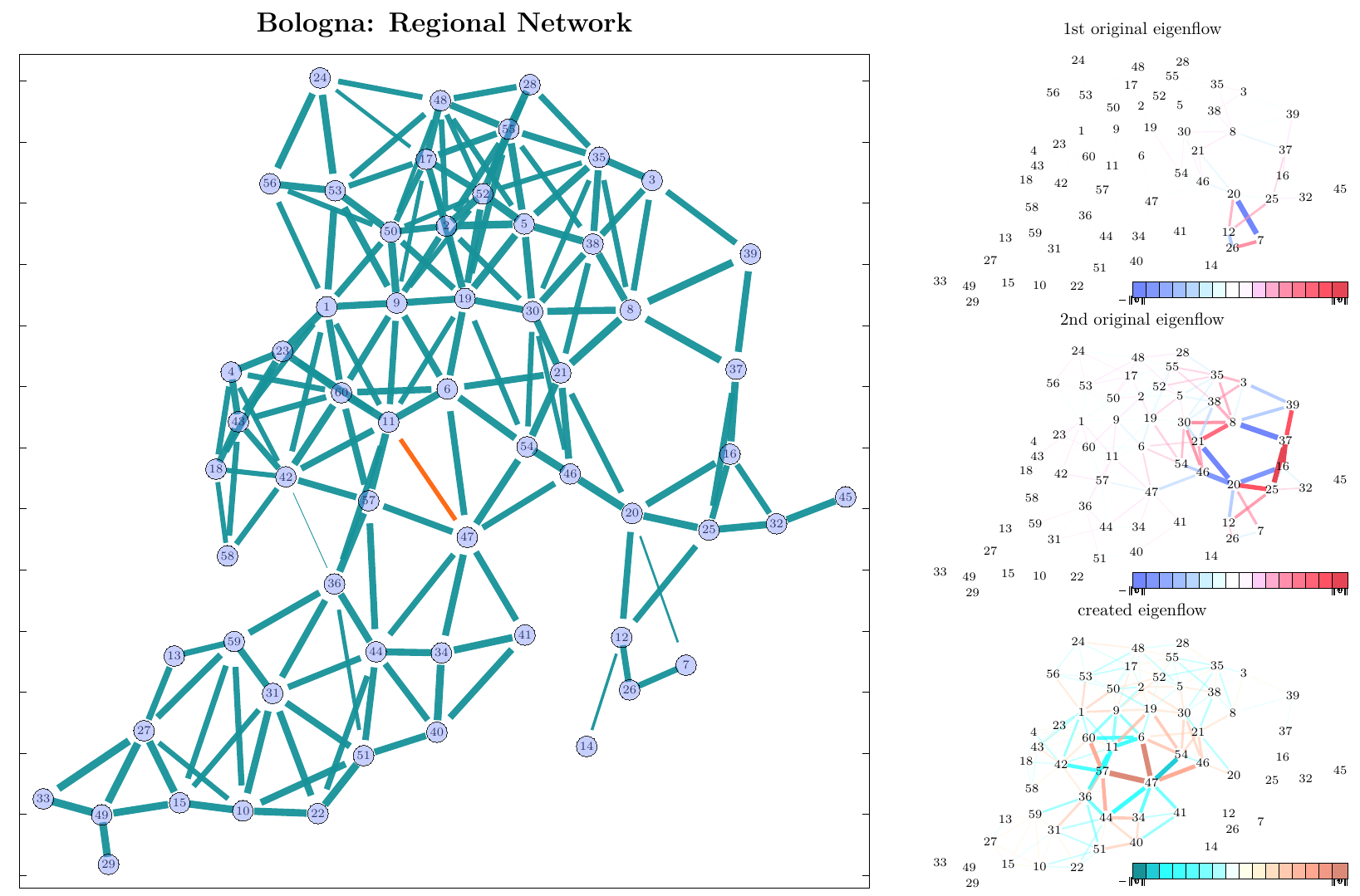}}
    \caption{Example of the Transportation Network for Bologna. Left pane: original zone graph where the width of edges corresponds to the weight, to-be-eliminated edge is colored in red. Right pane: eigenflows, original and created; color and width correspond to the magnitude of entries.}
    \label{fig:bologna}
\end{figure}

\begin{table}[tbhp]
{\footnotesize
  \caption{Topological instability of the transportation networks: filtered zone networks with the corresponding perturbation norm $\eps$ and its percentile among $w_1(\cdot)$ profile. For each simplicial complex the number of nodes, edges and triangles in $\mc V_2(\mc K)$ are provided alongside the initial number of holes $\beta_1$. The results of the algorithm consist of the perturbation norm, $\eps$, computation time, and approximate percentile $p$. }\label{tab:bologna}
\begin{center}

\begin{tabular}{r|ccc|c||ccc|}
\cline{2-8}
                                                         & \multicolumn{3}{c|}{network}                                                                                & \multirow{2}{*}{$\beta_1$} & \multicolumn{3}{c|}{logarithmic weights}                                       \\ 
                                                         & \multicolumn{1}{c}{$n$}                 & \multicolumn{1}{c}{$m$}                  & $\Delta$             &                            & \multicolumn{1}{c}{time}    & \multicolumn{1}{c}{$\eps$}  & $p$      \\ \hline
\multicolumn{1}{|r|}{\multirow{2}{*}{Bologna}}           & \multicolumn{1}{c}{\multirow{2}{*}{60}} & \multicolumn{1}{c}{\multirow{2}{*}{175}} & \multirow{2}{*}{171} & \multirow{2}{*}{2}         & \multicolumn{1}{c|}{$2.43$s} & \multicolumn{1}{c|}{$0.65$}  & $0.003$  \\
\multicolumn{1}{|c|}{}                                   & \multicolumn{1}{c}{}                    & \multicolumn{1}{c}{}                     &                      &                            & \multicolumn{3}{c|}{ \textbf{{[}11, 47{]}} ($4^{th}$ smallest)}                  \\ \hline
\multicolumn{1}{|r|}{\multirow{2}{*}{Anaheim}}           & \multicolumn{1}{c}{\multirow{2}{*}{38}} & \multicolumn{1}{c}{\multirow{2}{*}{159}} & \multirow{2}{*}{221} & \multirow{2}{*}{1}         & \multicolumn{1}{c|}{$5.39$s} & \multicolumn{1}{c|}{$0.57$}  & $0.003$  \\
\multicolumn{1}{|c|}{}                                   & \multicolumn{1}{c}{}                    & \multicolumn{1}{c}{}                     &                      &                            & \multicolumn{3}{c|}{ \textbf{{[}10, 29{]}} ($11^{th}$ smallest)}                 \\ \hline
\multicolumn{1}{|r|}{\multirow{2}{*}{Berlin-Tiergarten}} & \multicolumn{1}{c}{\multirow{2}{*}{26}} & \multicolumn{1}{c}{\multirow{2}{*}{63}}  & \multirow{2}{*}{55}  & \multirow{2}{*}{0}         & \multicolumn{1}{c|}{$2.46$s} & \multicolumn{1}{c|}{$1.18$}  & $0.015$  \\
\multicolumn{1}{|c|}{}                                   & \multicolumn{1}{c}{}                    & \multicolumn{1}{c}{}                     &                      &                            & \multicolumn{3}{c|}{\textbf{{[}6, 16{]}} ($20^{th}$ smallest)}                  \\ \hline
\multicolumn{1}{|r|}{\multirow{2}{*}{Berlin-Mitte}}      & \multicolumn{1}{c}{\multirow{2}{*}{98}} & \multicolumn{1}{c}{\multirow{2}{*}{456}} & \multirow{2}{*}{900} & \multirow{2}{*}{1}         & \multicolumn{1}{c|}{$127$s}  & \multicolumn{1}{c|}{$0.887$} & $0.0016$ \\
\multicolumn{1}{|c|}{}                                   & \multicolumn{1}{c}{}                    & \multicolumn{1}{c}{}                     &                      &                            & \multicolumn{3}{c|}{\textbf{{[}57, 87{]}} ($6^{th}$), \textbf{{[}58, 87{]}}, ($17^{th}$) }               \\ \hline
\end{tabular}

\end{center}
}
\end{table}

In the case of Bologna, \Cref{fig:bologna}, the algorithm eliminates the edge $[11, 47]$ (Casalecchio di Reno -- Pianoro) creating a new hole $6 - 11 - 57 - 47$. We also provide examples of the eigenflows in the kernel of the Hodge Laplacian (original and additional perturbed): original eigenvectors correspond to the circulations around holes $7-26-12-20$ and $8-21-20-16-37$ non-locally spread in the neighborhood~\cite{schaub2020random}. 

The results for four different networks are summarized in the \Cref{tab:bologna}; $p$ mimics the percentile, $\eps/\sum_{e\in \mc V_1}{w_i(e)}$, showing the overall small perturbation norm contextually. 
At the same time, we emphasize that except Bologna (which is influenced by the geographical topology of the land), the algorithm does not choose the smallest weight possible; indeed, given our interpretation of the topological instability, the complex for Berlin-Tiergarten is stable and the transportation network is effectively constructed.

\section{Discussion}

In the current work, we formulate the notion of $k$-th order topological stability of a simplicial complex $\mc K$ as the smallest data perturbation required to create one additional $k$-th order hole in $\mc K$.
By introducing an appropriate weighting and normalization, the stability is reduced to a matrix nearness problem solved by a bi-level optimization procedure. Despite the highly nonconvex landscape, our proposed procedure alternating constrained and free gradient steps yields a monotonic descending scheme. Our experiments show that this approach is generally successful in computing  the minimal perturbation increasing $\beta_1(\eps, E)$, even for potentially difficult cases, as the one proposed in  \Cref{sec:illustrarive_example}.

For simplicity, here we limit our attention to the smallest perturbation that introduces only one new hole. However, a simple modification may be employed to address the case of the introduction of $m$ new holes:  include the sum of $m$ nonzero eigenvalues of $L_1^{up}(\eps,E)$ rather than just the first one  in the spectral functional (\ref{eq:functional}). 
We also remark that, due to the spectral inheritance principle  \ref{thm:inherit}, the proposed framework for $\bar{\mc H}_1$ can be in principle extended to a general $\bar{\mc H}_k$; however, this extension requires nontrivial considerations on the data modification procedure and on the numerical linear algebra tools, as a nontrivial topology of higher-order requires a much denser network.

Different improvements are possible in terms of numerical implementation, including investigating the use of more sophisticated (e.g. implicit) integrators for the gradient flow system (\ref{eq:traj_proj}), which   would additionally require the use  of higher-order derivatives of $\lambda_+(\eps, E)$. Moreover, as already mentioned in \Cref{sec:computational_cost}, the numerical method for the computation of the small singular values would benefit from the use of an efficient preconditioner that can be effectively updated throughout the flow. Investigations in this direction are in progress and will be the subject of future work.

\end{document}